\newtheorem{theorem}{Theorem}
\newtheorem{property}{Property}
\newtheorem{proposition}{Proposition}
\newtheorem{lemma}{Lemma}
\title{On $L$-functions of Hecke characters and \\ anticyclotomic towers}
\date{}
\begin{document}
	\maketitle
	{\centering Haijun Jia \par }
	\begin{abstract}
		\small
		In this paper, we generalize a work of Rohrlich. Let $K/\mathbb{Q}$ be an imaginary quadratic field and $\phi$ be a Hecke character of $K$ of infinite type (1,0) whose restriction to $\mathbb{Q}$ is the quadratic character corresponding to $K/\mathbb{Q}$. We consider a class of Hecke characters $\chi$, which are anticyclotomic twists of $\phi$ with ramification in a prescribed finite set of primes. We shall prove the central vanishing order of the Hecke $L$-function $L(s,\chi$) attached to each $\chi$ is 0 or 1 depending on the root number $W(\chi)$ for all but finitely many such $\chi$. 
	\end{abstract}
	
	\section{Introduction}
	Let $K$ be an imaginary quadratic field with class number $h$ and $\mathcal{O}$ be the ring of integers of $K$. Let $M$ be an abelian extension of $K$ which can be infinte. We say $M/K$ is anticyclotomic if the nontrivial element of Gal($K/\mathbb{Q}$) acts on Gal($M/K$) by inversion. Let $P$ be a fixed finite set of rational primes. Let $L$ be the compositum of all anticyclotomic extensions of $K$ which are unramified outside $P$. The field $L$ can be also described as the union of all ring class fields of $K$ with conductor divisible only by primes in $P$. Using class field theory, we know that Gal($L/K$) is isomorphic to the product of a finite group and  
	$$
	\prod_{p\in P}\mathbb{Z}_p,
	$$
	where $\mathbb{Z}_p$ is the ring of $p$-adic integers.
	
	Let $\phi$ be a Hecke character (also called Gr\"ossen character in many literatures) of $K$ with infinite type (1,0), and $\mathfrak{f}(\phi)$ be the conductor of $\phi$. We say $\phi$ is equivariant with respect to complex conjugation (or equivariant for simplicity), if for all integral ideal $\mathfrak{a}$ of $K$, we have 
	$$
	\phi(\bar{\mathfrak{a}})=\overline{\phi(\mathfrak{a})}.
	$$
	We assume that $\phi$ is equivariant with respect to complex conjugation in the following discussions.
	
	For a given finite order character $\rho:\mbox{Gal}(L/K)\rightarrow\mathbb{C}^{\times}$, we view it as an idele class character through
	$$
	\mathbb{A}_{K}^{\times}/K^{\times}\simeq {\rm Gal}(K^{ab}/K)\twoheadrightarrow {\rm Gal}(L/K)\rightarrow\mathbb{C}^{\times},
	$$
	and let $\phi\rho$ denote the primitive Hecke character given by their product. Let
	$$
	X=\{\chi\,|\ \chi=\phi\rho\thickspace \mbox{for some finite order character $\rho$ of Gal$(L/K)$}\}.
	$$
	
	We note that $\rho$ is equivariant because $L/K$ is anticyclotomic, so $\chi$ is also equivariant for all $\chi\in X$. Hence
	$$
	L(s,\chi)=\sum_{\mathfrak{a}}\chi(\mathfrak{a})N\mathfrak{a}^{-s}	=\sum_{\mathfrak{a}}\chi(\bar{\mathfrak{a}})N\mathfrak{a}^{-s}
	=\sum_{\mathfrak{a}}\overline{\chi}(\mathfrak{a})N\mathfrak{a}^{-s}
	=L(s,\bar{\chi}).
	$$
	For the real number $s$,
	$$L(s,\bar{\chi})=\lim_{t\rightarrow\infty}\sum_{N\mathfrak{a}\leq t}\bar{\chi}(\mathfrak{a})N\mathfrak{a}^{-s}
	=\lim_{t\rightarrow\infty}\overline{\sum_{N\mathfrak{a}\leq t}\chi(\mathfrak{a})N\mathfrak{a}^{-s}}
	=\overline{L(s,\chi)},
	$$
	so $L(s,\chi)$ is real. Therefore, in the functional equation
	$$
	\Lambda(s,\chi)=W(\chi)\Lambda(2-s,\bar{\chi})=W(\chi)\Lambda(2-s,\chi),
	$$
	the root number $W(\chi)=1$ or $-1$, and $W(\chi)$ determines the parity of the vanishing order of $L(s,\chi)$ at $s=1$. Here $\Lambda$ is the completed $L$-function which is defined exactly in the following section. However, $W(\chi)$ in fact almost determines the vanishing order, and we give the main result of this paper:
	
	\begin{theorem}
		For all but finitely many $\chi\in X$,
		
		${\rm ord}_{s=1}L(s,\chi)=\left\{
		\begin{aligned}
			0,\qquad &\mbox{if }\ W(\chi)=1,\\
			1,\qquad &\mbox{if }\ W(\chi)=-1.
		\end{aligned}\right.$
	\end{theorem}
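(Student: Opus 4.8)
The plan is to reduce the dichotomy to a single non-vanishing statement, establish the latter by an averaging argument of the type Rohrlich used for $\mathrm{GL}(2)$, and then — the genuinely delicate step — pass from a density statement to ``all but finitely many'' by bringing in the arithmetic of the special value. Since $L(s,\chi)$ is real on the real axis and $\Lambda(s,\chi)=W(\chi)\Lambda(2-s,\chi)$, the conclusion of the theorem for a fixed $\chi=\phi\rho$ amounts to $\ell(\chi)\neq 0$, where $\ell(\chi):=L(1,\chi)$ if $W(\chi)=1$ and $\ell(\chi):=L'(1,\chi)$ if $W(\chi)=-1$; in either case $\ell(\chi)$ is a fixed nonzero archimedean constant times the first Taylor coefficient of $\Lambda(s,\chi)$ at $s=1$ not forced to vanish by the functional equation. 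A computation of local root numbers — using that $\rho$ is anticyclotomic and unramified outside $P$ while the archimedean place and the conductor of $\phi$ are fixed — shows that for $\mathfrak f(\rho)$ large the sign $W(\phi\rho)$ depends only on $\mathfrak c=\mathfrak f(\rho)$; hence $X$ breaks into layers $X(\mathfrak c)=\{\phi\rho:\mathfrak f(\rho)=\mathfrak c\}$, each of pure sign, and it suffices to prove that in each layer all but finitely many $\chi$ have $\ell(\chi)\neq 0$.

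For the averaging I would start from an approximate functional equation at $s=1$; combined with the relation $\bar\chi(\mathfrak a)=\chi(\bar{\mathfrak a})$ coming from equivariance, it presents $\ell(\phi\rho)$ as a rapidly convergent sum $\sum_{\mathfrak a}a_{\mathfrak a}\rho(\mathfrak a)$ of effective length $O(\sqrt{N\mathfrak c})$, with $a_{\mathfrak a}=\phi(\mathfrak a)N\mathfrak a^{-1}w(N\mathfrak a)$ for a smooth cutoff $w$ (carrying a logarithmic factor when $W=-1$, so that $w(1)\asymp\log N\mathfrak c$ there and $w(1)=1+o(1)$ otherwise). Summing over all $\rho$ of conductor dividing $\mathfrak c$, which form the character group of a finite quotient $G_\mathfrak c$ of $\mathrm{Gal}(L/K)$, orthogonality collapses the $\rho$-sum to $|G_\mathfrak c|\sum_{[\mathfrak a]=1}a_{\mathfrak a}$: the term $\mathfrak a=\mathcal O$ gives $|G_\mathfrak c|\,w(1)$, while every other ideal trivial in $G_\mathfrak c$ has norm tending to infinity with $\mathfrak c$ and contributes $o(w(1))$. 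A Möbius inversion over the conductor then yields $\sum_{\phi\rho\in X(\mathfrak c)}\ell(\phi\rho)=|X(\mathfrak c)|\,w(1)\,(1+o(1))\neq 0$ for $\mathfrak c$ large, and with the Rankin--Selberg bound $\sum_{\phi\rho\in X(\mathfrak c)}|\ell(\phi\rho)|^2\ll|X(\mathfrak c)|\,w(1)^2(\log N\mathfrak c)^A$ and Cauchy--Schwarz this gives $\ell(\phi\rho)\neq 0$ for a positive proportion of the layer.

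The main obstacle is upgrading ``a positive proportion'' to ``all but finitely many'', which no refinement of the mean-value method can reach. To do so I would use that $\ell(\phi\rho)$, divided by a fixed CM period (and, when $W=-1$, by the attached regulator), is algebraic over the field $E$ generated by the values of $\phi$ and transforms under $\mathrm{Gal}(\overline{\mathbb Q}/E)$ by $\rho\mapsto\rho^\sigma$ — for $W=1$ this is Damerell's theorem, while for $W=-1$ it comes from the Gross--Zagier formula for this CM situation, which expresses $L'(1,\phi\rho)$ through the height of a CM point. Hence the set of $\phi\rho\in X(\mathfrak c)$ with $\ell(\phi\rho)=0$ is Galois-stable. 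Restricting to a single $\mathbb Z_p$-tower ($p\in P$), the sum $\sum_{\mathfrak a}a_{\mathfrak a}\rho(\mathfrak a)$ exhibits $\rho\mapsto\ell(\phi\rho)$ as the specialisation of a single power series — equivalently, the anticyclotomic $p$-adic $L$-function of $\phi$ when $W=1$, or a Heegner-point-valued analogue of it when $W=-1$ — which is nonzero (by known results on the $\mu$-invariant of Katz's $p$-adic $L$-function in the first case, and by the non-triviality of the Heegner points along the anticyclotomic tower in the second); Weierstrass preparation then bounds its number of zeros, so for $\mathfrak c$ large no $\chi\in X(\mathfrak c)$ satisfies $\ell(\chi)=0$. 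Treating the several primes of $P$ one at a time (or via the multivariable versions) yields $\ell(\chi)\neq 0$ for every $\chi$ of sufficiently large conductor, and since only finitely many $\chi\in X$ have bounded conductor, the theorem follows. I expect the $W=-1$ case to be the crux, requiring both the Gross--Zagier identity and the non-vanishing of the Heegner-point family in the tower.
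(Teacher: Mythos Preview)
Your proposal diverges sharply from the paper's argument, and the divergence begins at the averaging step. You average $\ell(\phi\rho)$ over \emph{all} anticyclotomic $\rho$ of a given conductor, extract a positive-proportion statement by Cauchy--Schwarz, and then attempt to upgrade to ``all but finitely many'' via $p$-adic $L$-functions and Weierstrass preparation. The paper instead averages over the \emph{Galois orbit} of a single $\chi$: setting
\[
L^{(v)}(1,\chi)_{av}=[K(\chi):K]^{-1}\sum_{\sigma}L^{(v)}(1,\chi^{\sigma}),
\]
the Shimura/Gross--Zagier equivariance you yourself cite (vanishing of $\ell(\chi)$ forces vanishing of $\ell(\chi^{\sigma})$, and $W(\chi^{\sigma})=W(\chi)$) means that $\ell(\chi)=0$ implies the orbit average vanishes. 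So one only has to show $L^{(v)}(1,\chi)_{av}\neq 0$ once $f(\chi)$ is large; no upgrade from a density statement is needed at all. You had the Galois-stability observation in hand but did not exploit it this way.

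For the orbit average the paper uses the same integral representation you describe, but splits the ideal sum into $\mathfrak a=\bar{\mathfrak a}$, which are rational and give a main term tending to $2L(1,\kappa)$ (or asymptotic to $2L(1,\kappa)\log(Af)$ when $v=1$), and $\mathfrak a\neq\bar{\mathfrak a}$. The latter is bounded by counting ideals $\mathfrak a$ of norm up to $f^{b}$ with $\chi_{av}(\mathfrak a):=[K(\chi):K]^{-1}\mathrm{Tr}_{K(\chi)/K}\chi(\mathfrak a)\neq 0$. A trace-of-roots-of-unity lemma plus the structure of $(\mathcal O\otimes\mathbb Z_p)^{\times}$ shows that $\chi_{av}(\mathfrak a)\neq 0$ forces, for each $p\mid q(\chi)$ with $q(\chi)\asymp f(\chi)$, a congruence $w\equiv \eta_p\omega_p x_p\pmod{q(\chi)\mathcal O\otimes\mathbb Z_p}$ on a generator $w$ of $\mathfrak a\mathfrak a_0^{-1}$. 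Writing $w=u+v\tau$ turns this into $u\beta_p-v\alpha_p\equiv 0\pmod{q\mathbb Z_p}$ with $\alpha_p,\beta_p$ algebraic, and Ridout's $p$-adic Roth theorem then gives the required counts $N(\chi,f^{a})=0$ and $N(\chi,f^{b})<f^{r}$. No $p$-adic $L$-functions, no Heegner points, no Iwasawa theory.

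Your proposed upgrade step has a genuine circularity hazard: the assertion that the anticyclotomic Katz $p$-adic $L$-function (or the Heegner family when $W=-1$) is not identically zero is itself a deep input, and in practice is either \emph{deduced from} Rohrlich-type non-vanishing or obtained through the equidistribution machinery of Vatsal and Cornut, which is far heavier than anything in this paper. Your side claim that $W(\phi\rho)$ depends only on $\mathfrak f(\rho)$ for large conductor is also unjustified (the paper proves only $W(\chi^{\sigma})=W(\chi)$, which is a much weaker Galois-orbit statement) and is in any case unnecessary once one averages over the orbit rather than over a full layer. The idea you are missing is that the Galois-orbit average already encodes ``all but finitely many'', and Roth's theorem is what makes its error term tractable.
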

	
	Our result is a generalization of Rohrlich $\cite{Roh84a}$, and now we compare it with Rohrlich $\cite{Roh84a}$. Rohrlich has actually proved the case when class number $h$ of $K$ is 1. As there are only 9 imaginary quadratic fields with class number 1, it is more satisfying to remove the class number 1 assumption. We explain this from an arithmetic point of view. We know that there is a cusp form of weight 2, which has trivial central character in our case, associated with $\chi$ (see $\cite{Hecke}$, $\cite{Shi71}$, $\cite{Shi72}$). Then there is an abelian variety $A$ associated to this cusp form (see $\cite{DarDiaTay}$, Definition 1.44). This abelian variety has to be defined over $\mathbb{Q}$ and of GL$_2$-type, and has complex multiplication over $K$ as well (see $\cite{YuanZhangZhang}$, Chapter 3.2 for definition of GL$_2$-type, and see $\cite{Milne}$ for definition of abelian variety with complex multiplication). Every elliptic curve defined over $\mathbb{Q}$ with complex multiplication by the ring of integers $\mathcal{O}$ of $K$ can be obtained through this approach. This is because for each elliptic curve of this type, there is Hecke character $\phi$ of $K$ associated to $E$, i.e. satisfying 	
	$$
	L(s,E/\mathbb{Q})=L(s,\phi),
	$$ 
	by Deuring's result (see $\cite{Silverman}$, Chapter II, Theorem 10.5). This Hecke character is equivariant because it is the Hecke character determined by an elliptic curve over $\mathbb{Q}$ (see $\cite{ShiGor}$, page 519, formula (4.6)), and also has infinite type (1,0), so we use the same notation $\phi$ when we state our main result. $E$ is given by this $\phi$ under the corresponding principle. Note that the class number $h$ of $K$ must be 1 if this kind of elliptic curves exists. Rohrlich $\cite{Roh84a}$ has proved the same theorem for this kind of Hecke characters given by elliptic curves, and his calculation relies on the assumption that $K$ has class number 1. Our main result does not need the class number 1 assumption. So it can be viewed as a generalization from elliptic curves to this special kind of abelian varieties. Even earlier, Greenberg has proved the case for elliptic cuve $E$ where $P$ contains a single prime of ordinary reduction for E (see $\cite{Greenberg}$, Theorem 3 and  Proposition 8).
	
	The proof of Theorem 1 follows the pattern of $\cite{Roh84a}$. A key ingredient of $\cite{Roh84a}$ is the following conclusion. Given $\chi\in X$ and a field automorphism $\sigma$ of $\mathbb{C}$, let $\chi^\sigma:\mathfrak{a}\mapsto\chi(\mathfrak{a})^{\sigma}$. Then we have
	$$
	L(1,\chi)=0\Rightarrow L(1,\chi^{\sigma})=0,
	$$
	$$
	W(\chi)=-1 \thickspace\mbox{and}\thickspace L'(s,\chi)=0\Rightarrow L'(s,\chi^\sigma)=0.
	$$
	The first statement follows from $\cite{Shi76}, \cite{shimura77}$, and the second one follows from $\cite{GroZag83}$. These results allow us to take a suitable kind of average. Another key ingredient of $\cite{Roh84a}$ is Ridout's $p$-adic version of Roth's theorem$\cite{Ridout}$. Rohrlich uses it to give an important estimation, which we shall recall later. 
	
	Our novel points are in the transition to Roth's theorem. The main difficulty for the generalization is that the Hecke character has no good description on non-principal ideals. A novel point of our work is to find a suitable element of idele group to reflect the property of each non-principal ideal. More precisely, for each integral ideal $\mathfrak{a}$ which contributes to the average, every generator of $\mathfrak{a}^h$ must satisfy a strict restriction given by $\chi$. Another point is to give a more general version of the Main Lemma of Rohrlich -- it is necessary in our discussion to work with certain $h$-th radicals in each local part to cancel the bad influence of $h$-th powers.

	
	{\bf Acknowlegements.} This paper will be the crucial part of the author's master degree thesis under supervision of Professor Xin Wan. The author extends his deepest gratitude to Professor Xin Wan for suggesting this interesting project, for his patient and insightful guidance, and for his encouragement throughout the whole process. He is also indebted to Professor David E. Rohrlich for his paper $\cite{Roh84a}$ which inspires this work, and for his positive perspective on this problem and helpful communications passed on to the author by Professor Xin Wan. Finally, He is thankful for Haidong Li, Ruichen Xu, and Dr. Luochen Zhao for beneficial conversations. 
	
	\section{$L$-functions}
	We retain the notations as before. Define $v=v(\chi)=0$ or 1 through $W(\chi)=(-1)^{v(\chi)}$. We want to prove for all but finitely many $\chi\in X$,
	$$
	L^{(v)}(1,\chi)\neq 0.
	$$
	
	We begin with a property of equivariant Hecke characters with infinite type (1,0). Let $\epsilon_1:(\mathcal{O}/\mathfrak{f}(\phi))^{\times}\rightarrow\mathbb{C}^{\times}$ such that $\phi(w\mathcal{O})=\epsilon_1(w)w,\forall w\in K$. Here we denote $\epsilon_1(w)=0$ if $w$ is not coprime to the conductor. In particular, let $\phi(n\mathcal{O})=\kappa_1(n)n,\forall n\in \mathbb{Z}$, then $\kappa_1(n)=\phi(n\mathcal{O})n^{-1}$ is a Dirichlet character. Let $\kappa$ be the quadratic Dirichlet character corresponding to $K$. 
	
	\begin{property}
		Let $\phi$ as above, then the followings are equivalent:
		
		{\rm (1)} $\phi(\bar{\mathfrak{a}})=\bar{\phi}(\mathfrak{a})$ for every integral ideal $\mathfrak{a}$;
		
		{\rm (2)} $\kappa_1$ is a quadratic Dirichlet character, and it coincides with $\kappa$ on rational integers coprime to $\mathfrak{f}(\phi)$.
	\end{property}
	
	\begin{proof}
		(1)$\Rightarrow$(2).
		$$
		\overline{\kappa_1(n)}=\overline{\phi(n\mathcal{O})n^{-1}}=\phi(n\mathcal{O})n^{-1}=\kappa_1(n).
		$$
		This means $\kappa_1$ is a quadratic Dirichlet character. We assume $\kappa_1$ to be a character of $(\mathbb{Z}/m\mathbb{Z})^{\times}$.
		$$
		\kappa_1(-n)=\phi_1(n\mathcal{O})(-n)^{-1}=-\kappa_1(n)\Rightarrow \kappa_1(-1)=-1.
		$$
		So $\kappa_1$ is nontrivial, hence equals 1 on the half of residue classes of $(\mathbb{Z}/m\mathbb{Z})^{\times}$, and equals $-1$ on the other half.
		
		For each integral ideal $\mathfrak{a}$ of $K$ which is coprime to $\mathfrak{f}(\phi)$,
		$$
		\kappa_1(N\mathfrak{a})=\phi(N\mathfrak{a}\cdot \mathcal{O})N\mathfrak{a}^{-1}=\phi(\mathfrak{a})\phi(\bar{\mathfrak{a}})N\mathfrak{a}^{-1}=\phi(\mathfrak{a})\bar{\phi}(\mathfrak{a})N\mathfrak{a}^{-1}=1,
		$$
		where $N$ is the absolute norm, and we use the fact that: for infinite type (1,0) Hecke character $\phi$, we have $\phi(\mathfrak{a})\bar{\phi}(\mathfrak{a})=N\mathfrak{a}$. Therefore, $\kappa_1(p)=1$ for each prime $p$ which is  coprime to $\mathfrak{f}(\phi)$ and splits in $K$. These primes take up the half of residue classes of $(\mathbb{Z}/m\mathbb{Z})^{\times}$, because they have Dirichlet density 
		$\frac{1}{2}$. So the other half of residue classes of $(\mathbb{Z}/m\mathbb{Z})^{\times}$ will be taken up by primes which are inert in $K$, and $\kappa_1$ takes value $-1$ on this half. This is a kind of description for $\kappa$, so we get the conclusion.
		
		(2)$\Rightarrow$(1). If $\phi(\mathfrak{a})=\phi(\bar{\mathfrak{a}})=0$, then (1) is obvious. Hence we may assume $\phi(\mathfrak{a})\neq 0$.
		$$
		\phi \mbox{ has infinite type (1,0)} \Rightarrow\phi(\mathfrak{a})\bar{\phi}(\mathfrak{a})=N\mathfrak{a}.$$
		$$
		\kappa_1(N\mathfrak{a})=\kappa(N\mathfrak{a})=1\Rightarrow \phi(\mathfrak{a})\phi(\bar{\mathfrak{a}})=N\mathfrak{a}\Rightarrow \phi(\bar{\mathfrak{a}})=\bar{\phi}(\mathfrak{a}).
		$$
	\end{proof}
	
	We also need a technical reduction. Let $R(\chi)$ be the set of rational prime factors of $N\mathfrak{f}(\chi)$. Then $R(\chi)$ is a finite subset of 
	$R(\phi)\cup P$. In particular, there are only finitely many possibilities for $R(\chi)$. Therefore, we fix a subset $R\subseteq R(\phi)\cup P$, and let 
	$$
	Y=\{\chi\in X\ |\ R(\chi)=R\}.
	$$
	It will be sufficient to show that $L^{(v)}(1,\chi)\neq 0$ for all but finitely many $\chi\in Y$.
	
	Fix $\chi\in Y$, let $\epsilon:(\mathcal{O}/\mathfrak{f}(\chi))^{\times}\rightarrow \mathbb{C}^{\times}$, such that $\chi(w\mathcal{O})=\epsilon(w)w$ for each $w\in K^{\times}$ coprime to $\mathfrak{f}(\chi)$. We have shown that $\chi(n\mathcal{O})=\kappa(n)n^{-1},\forall n\in\mathbb{Z}$ such that $n$ is coprime to
	$\mathfrak{f}(\chi)$.
	
	Now we begin to deal with $L$-functions. Let
	\begin{equation}
		A=(2\pi)^{-1}|\mbox{discriminant of $K$}|^{\frac{1}{2}},
	\end{equation}
	\begin{equation}
		f=f(\chi)=(N\mathfrak{f}(\chi))^{\frac{1}{2}},
	\end{equation}
	\begin{equation}
		\Lambda(s,\chi)=\Gamma(s)(Af)^sL(s,\chi).
	\end{equation}
	The functional equation is	
	\begin{equation}
		\Lambda(s,\chi)=W(\chi)\Lambda(2-s,\chi).
	\end{equation}
	
	Hecke proved that
	\begin{equation}
		\Lambda(s,\chi)=\int_{1}^{\infty}\theta(t,\chi)(t^s+W(\chi)t^{2-s})\frac{dt}{t},
	\end{equation}
	where
	$$
	\theta(t,\chi)=\sum_{\mathfrak{a}}\chi(\mathfrak{a})e^{-\frac{N\mathfrak{a}}{Af}t},
	$$
	$\mathfrak{a}$ runs through integral ideals of $K$ (see $\cite{Roh84a}$, (6) and (7)). Letting $s=1$ if $v=0$, or differentiating with respect to $s$ and then letting $s=1$ if $v=1$, we have:
	\begin{equation}
		\Lambda^{(v)}(1,\chi)=2\int_{1}^{\infty}\theta(t,\chi)(\log t)^vdt.
	\end{equation}
	For $u> 0$, we have
	$$
	\int_{1}^{\infty}e^{-tu}(\log t)^vdt=\frac{I_v(u)}{u},\qquad I_v(u)=\int_{u}^{\infty}e^{-t}\Bigl(\log\frac{t}{u}\Bigr)^vdt.
	$$
	Exchanging the order of sum and integral to get
	\begin{equation}
		\Lambda^{(v)}(1,\chi)=2Af\sum_{\mathfrak{a}}\chi(\mathfrak{a})N\mathfrak{a}^{-1}I_v(\frac{N\mathfrak{a}}{Af}),
	\end{equation}
	we have
	\begin{equation}\label{16}
		L^{(v)}(1,\chi)=2\sum_{\mathfrak{a}}\chi(\mathfrak{a})N\mathfrak{a}^{-1}I_v(\frac{N\mathfrak{a}}{Af}).
	\end{equation}
	
	We divide the sum on the right side of (\ref{16}) into two parts,
	\begin{equation}
		\sum_{\mathfrak{a}}=\sum_{\mathfrak{a}=\bar{\mathfrak{a}}}+\sum_{\mathfrak{a}\neq\bar{\mathfrak{a}}}.
	\end{equation}
	If $\mathfrak{a}=\bar{\mathfrak{a}},\chi(\mathfrak{a})\neq 0$, then $\mathfrak{a}=n\mathcal{O}$ for some positive integer $n$. This is because $\kappa$ takes value 0 on primes which ramify in $K$. Therefore,
	\begin{equation}
		\chi(\mathfrak{a})=\chi(n\mathcal{O})=\kappa(n)n,
	\end{equation}
	so
	\begin{equation}
		L^{(v)}(1,\chi)=2\sum_{n\geq 1}\kappa(n)n^{-1}I_v(\frac{n^2}{Af})+2\sum_{\mathfrak{a}\neq\bar{\mathfrak{a}}}\chi(\mathfrak{a})N\mathfrak{a}^{-1}I_v(\frac{N\mathfrak{a}}{Af}).
	\end{equation}
	
	It seems that the first term can be handled easily, but it is hard to deal with the second term directly. We should take some kind of average. Let $\sigma$ be a $K$-automorphism of $\mathbb{C}$. It is easy to see that $\mathfrak{f}(\chi)=\mathfrak{f}(\chi^{\sigma})$. Although $\chi^{\sigma}$ may not be in $X$, it is still a Hecke character of $K$ with infinite type (1,0) satisfying   
	\begin{equation}
		\chi^{\sigma}(n\mathcal{O})=\kappa(n)n,
	\end{equation}
	which means $\chi^{\sigma}$ is equivariant. So we can replace $\chi$ by $\chi^\sigma$ and repeat the discussion above to $\chi^\sigma$. To take the suitable average, we need  
	\begin{equation}
		W(\chi^\sigma)=W(\chi).
	\end{equation}
	
	There is an explicit formula for $W(\chi)$ (see $\cite{Miyake}$, page 93, Theorem 3.3.1).
	Let $\mathcal{D}$ be the different ideal of $K$, i.e. the integral ideal of $K$ such that
	\begin{equation}
		\mathcal{D}^{-1}=\{a\in K|\ {\rm Tr}_{K/\mathbb{Q}}(ab)\in\mathbb{Z},\ \forall b\in \mathcal{O}\}.
	\end{equation}
	For quadratic field $K=\mathbb{Q}(\sqrt{D})$, we have
	\begin{equation}
		\mathcal{D}=\left\{
		\begin{aligned}
			(2\sqrt{D})\mathcal{O},\qquad&\mbox{if}\, D\equiv 2,3\mod 4,\\
			\sqrt{D}\mathcal{O},\qquad&\mbox{if}\,D\equiv 1 \mod 4.
		\end{aligned}\right.
	\end{equation}
	Take an integral ideal $\mathfrak{c}$ such that $\mathfrak{f}(\chi)\mathfrak{c}=b\mathcal{O}$, $b\in \mathcal{O}$, and $\mathfrak{f}(\chi)$ is coprime to $\mathfrak{c}$. Let $\delta$ be a generator of $\mathcal{D}$, then by Theorem 3.3.1 in page 93 of $\cite{Miyake}$, 
	\begin{equation}
		W(\chi)=(-i)\cdot f^{-1}\cdot\frac{\delta}{|\delta|}\cdot\frac{b}{|b|}\cdot\frac{N(\mathfrak{c})^{\frac{1}{2}}}{\chi(\mathfrak{c})}\cdot\sum_{w}\epsilon(w)e^{2\pi i {\rm Tr}_{K/\mathbb{Q}}(\frac{w}{\delta b})},
	\end{equation}
	where $w$ runs through the representatives of $\mathfrak{c}/\mathfrak{f}(\chi)\mathfrak{c}$.
	We may assume $\frac{\delta}{|\delta|}=i$. Then we substitute $f=N(\mathfrak{f(\chi)})^{\frac{1}{2}},|b|=N(\mathfrak{f(\chi)\mathfrak{c}})^{\frac{1}{2}}$, and let the coefficient $c(\chi)=\frac{b}{N(\mathfrak{f(\chi)})}\chi(\mathfrak{c})^{-1}$.
	
	Let $\xi$ be a root of unity, such that $\mathbb{Q}(\xi)$ includes $K$ and all roots of unity $e^{2\pi i {\rm Tr}_{K/\mathbb{Q}}(\frac{w}{\delta b})}$. Let $\sigma(\xi)=\xi^{m}$, where $m$ is a positive integer. We apply $\sigma$ on $W(\chi)$ to get 
	\begin{equation}
		W(\chi)=c(\chi^{\sigma})\sum_{w}\epsilon(w)^{\sigma}e^{2\pi i {\rm Tr}_{K/\mathbb{Q}}(\frac{mw}{\delta b})}.
	\end{equation}
	Taking $n\in \mathbb{Z}$ such that $nm\equiv 1 \mod \mathfrak{f(\chi)}$ and substituting $w$ by $nw$ in the above equation, we have
	\begin{equation}
		W(\chi)=\kappa(n) W(\chi^\sigma),
	\end{equation}
	where we use $\epsilon(nw)=\kappa(n)\epsilon(w)$. Note that $\kappa(n)=\kappa(m)=1$, because a description of $\kappa$ is 
	Gal$(\mathbb{Q}(\xi')/\mathbb{Q})\rightarrow {\rm Gal}(K/\mathbb{Q})\simeq \{\pm 1\}$, for a suitable root of unity $\xi'$, and we assume $\sigma$ fix $K$.
	
	Now we can deal with $\chi^\sigma$ same as $\chi$ and get
	\begin{equation}
		L^{(v)}(1,\chi^\sigma)=2\sum_{n\geq 1}\kappa(n)n^{-1}I_v(\frac{n^2}{Af})+2\sum_{\mathfrak{a}\neq\bar{\mathfrak{a}}}\chi^\sigma(\mathfrak{a})N\mathfrak{a}^{-1}I_v(\frac{N\mathfrak{a}}{Af}).
	\end{equation}
	Let $K(\chi)$ be the finite extension of $K$ generated by values of $\chi$. We consider
	\begin{equation}
		L^{(v)}(1,\chi)_{av}=[K(\chi):K]^{-1}\sum_{\sigma}L^{(v)}(1,\chi^\sigma),
	\end{equation}
	where $\sigma$ runs through a set of automorphisms of $\mathbb{C}$ which restrict to the distinct embedings of $K(\chi)$ over $K$. Then
	\begin{equation}\label{4}
		L^{(v)}(1,\chi)_{av}=2\sum_{n\geq 1}\kappa(n)n^{-1}I_v(\frac{n^2}{Af})+2\sum_{\mathfrak{a}\neq\bar{\mathfrak{a}}}\chi_{av}(\mathfrak{a})N\mathfrak{a}^{-1}I_v(\frac{N\mathfrak{a}}{Af}),
	\end{equation}
	where $\chi_{av}$ is defined as
	\begin{equation}
		\chi_{av}(\mathfrak{a})=[K(\chi):K]^{-1}\sum_{\sigma}\chi(\mathfrak{a})^\sigma=[K(\chi):K]^{-1}{\rm Tr}_{K(\chi)/K}(\chi(\mathfrak{a})).
	\end{equation}
	The reason that this average can simplify the problem is the following. Suppose $L^{(v)}(1,\chi)=0$. Then by 
	$$
	L(1,\chi)=0\Rightarrow L(1,\chi^{\sigma})=0,
	$$
	$$
	W(\chi)=-1\thickspace \mbox{and} \thickspace L'(s,\chi)=0\Rightarrow L'(s,\chi^\sigma)=0,
	$$
	which we have mentioned in the introduction, we can know that every $L^{(v)}(1,\chi^\sigma)=0$. In particular, $L^{(v)}(1,\chi)_{av}=0$. On the other hand, we shall prove that $L^{(v)}(1,\chi)_{av}\neq 0$ when $f(\chi)$ is sufficiently large. This will prove the theorem, because there are only finitely many $\chi\in Y$ such that $f(\chi)$ lies below a given bound.
	
	Now we should consider $v=0$ and $v=1$ cases separately.
	
	For $v=0$ case, 
	\begin{equation}
		I_{0}(u)=\int_{u}^{\infty}e^{-t}dt=e^{-u},
	\end{equation}
	\begin{equation}\label{11}
		L^{(v)}(1,\chi)_{av}=2\sum_{n\geq 1}\kappa(n)n^{-1}e^{-\frac{n^2}{Af}}+2\sum_{\mathfrak{a}\neq\bar{\mathfrak{a}}}\chi_{av}(\mathfrak{a})N\mathfrak{a}^{-1}e^{-\frac{N\mathfrak{a}}{Af}}.
	\end{equation}	
	According to Abel's theorem on power series, 
	\begin{equation}
		\sum_{n\geq 1}\kappa(n)n^{-1}e^{-\frac{n^2}{Af}}\rightarrow \sum_{n\geq 1}\kappa(n)n^{-1}=L(1,\kappa),
	\end{equation}
	when $f$ goes to infinity. Since $L(1,\kappa)\neq 0$, it will be sufficient to show that the second term of ($\ref{11}$) converges to 0 as $f$ goes to infinity.
	
	Fix $\chi\in Y$, let $\mathcal{N}(\chi,t)$ denote the set of integral ideals $\mathfrak{a}$ of $K$ satisfying the following conditions:
	
	(1) $\chi_{av}(\mathfrak{a})\neq 0$;
	
	(2) $\mathfrak{a}\neq \bar{\mathfrak{a}}$;
	
	(3) $N\mathfrak{a}\leq t$.
	
	\noindent Let $N(\chi,t)$ be the cardinality of $\mathcal{N}(\chi,t)$.
	
	To estimate the second term, we divide the sum into three parts
	\begin{equation}
		\sum_{\mathfrak{a}\neq\bar{\mathfrak{a}}}\chi_{av}(\mathfrak{a})N\mathfrak{a}^{-1}e^{-\frac{N\mathfrak{a}}{Af}}=\sum_{\mathfrak{a}\neq\bar{\mathfrak{a}},\,N\mathfrak{a}\leq x}+\sum_{\mathfrak{a}\neq\bar{\mathfrak{a}},\,x< N\mathfrak{a}\leq y}+\sum_{\mathfrak{a}\neq\bar{\mathfrak{a}},\,y<N\mathfrak{a}}.
	\end{equation}
	Since $|\chi_{av}(\mathfrak{a})|\leq(N\mathfrak{a})^\frac{1}{2}$, we obtain the upper bounds
	\begin{equation}\label{1}
		\Bigl|\sum_{\mathfrak{a}\neq\bar{\mathfrak{a}},\,N\mathfrak{a}\leq x}\Bigr|\leq N(\chi,x),
	\end{equation}
	\begin{equation}\label{2}
		\Bigl|\sum_{\mathfrak{a}\neq\bar{\mathfrak{a}},\,x< N\mathfrak{a}\leq y}\Bigr|\leq N(\chi,y)x^{-\frac{1}{2}},
	\end{equation}
	\begin{equation}\label{3}
		\Bigl|\sum_{\mathfrak{a}\neq\bar{\mathfrak{a}},\,y<N\mathfrak{a}}\Bigr|\leq\sum_{y<N\mathfrak{a}}N\mathfrak{a}^{-\frac{1}{2}}e^{-\frac{N\mathfrak{a}}{Af}}.
	\end{equation}
	
	We hope to take appropriate $x,y,$ such that these bounds go to 0 when $f$ goes to infinty. We shall prove the following key propositions later.
	
	\begin{proposition}
		Fix $a<1$, if $f=f(\chi)$ is sufficiently large, then
		$$
		N(\chi,f^a)=0.
		$$
	\end{proposition}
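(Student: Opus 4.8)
The plan is to prove the stronger assertion that every ideal $\mathfrak{a}$ counted by $N(\chi,t)$ satisfies $N\mathfrak{a}\gg f(\chi)^{1-o(1)}$ as $f(\chi)\to\infty$, uniformly in $\chi$; since $a<1$ is fixed this immediately gives $\mathcal{N}(\chi,f^a)=\emptyset$, hence $N(\chi,f^a)=0$, once $f(\chi)$ is large. For the bookkeeping: if $\chi_{av}(\mathfrak{a})\neq 0$ then $\chi(\mathfrak{a})\neq 0$, so $\mathfrak{a}$ is prime to $\mathfrak{f}(\chi)$; write $\mathfrak{a}^h=\alpha\mathcal{O}$ with $\alpha$ prime to $\mathfrak{f}(\chi)$. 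From $\chi(\mathfrak{a})^h=\chi(\alpha\mathcal{O})=\epsilon(\alpha)\alpha$ and, for each embedding $\sigma$, $\chi^\sigma(\mathfrak{a})^h=\epsilon(\alpha)^\sigma\alpha$, one gets on the one hand that $\chi(\mathfrak{a})$ generates the extended ideal $\mathfrak{a}\mathcal{O}_{K(\chi)}$ --- it is the distinguished element attached to the possibly non-principal $\mathfrak{a}$, playing the role that a generator plays in Rohrlich's class-number-one argument --- and on the other hand that $\eta_\sigma:=\chi^\sigma(\mathfrak{a})/\chi(\mathfrak{a})$ is a root of unity with $\eta_\sigma^{\,h}=\epsilon(\alpha)^\sigma/\epsilon(\alpha)$. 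Therefore $\mathrm{Tr}_{K(\chi)/K}(\chi(\mathfrak{a}))=\chi(\mathfrak{a})\sum_\sigma\eta_\sigma$, and $\chi_{av}(\mathfrak{a})\neq 0$ is equivalent to $\sum_\sigma\eta_\sigma\neq 0$.

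The core step is to show that $\sum_\sigma\eta_\sigma\neq 0$ forces a strong congruence on $\mathfrak{a}$. Grouping the sum according to the restriction of $\sigma$ to $E:=K(\epsilon(\alpha))$, and noting that for $\sigma$ fixing $E$ the root of unity $\eta_\sigma$ is an $h$-th root of $1$, one sees --- much as the sum of all $h$-th roots of a fixed element vanishes --- that non-vanishing is impossible unless $\epsilon(\alpha)\alpha$ is a proper power in $E$ (up to the $4\mid h$ exception of Capelli's criterion), equivalently unless $\chi(\mathfrak{a})$ already lies in a comparatively small subfield and $\epsilon(\alpha)$ has order bounded independently of $\chi$. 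This is where the novelty over Rohrlich lies: since only $\chi(\mathfrak{a})^h$ is under direct control, the deduction must be done completion by completion, extracting $h$-th radicals locally; this is legitimate precisely because the conductor of the anticyclotomic part of $\chi$ is divisible only by the fixed primes in $P$, so the pertinent ray-class groups and local unit groups have $h$-torsion of size bounded uniformly in $\chi$. Feeding this back, and using that an anticyclotomic character is injective on the minus eigenspace of complex conjugation in $(\mathcal{O}/\mathfrak{f})^\times$, one obtains: $\mathfrak{a}$ is congruent, modulo a divisor $\mathfrak{g}$ of $\mathfrak{f}(\chi)$ of norm $N\mathfrak{g}\asymp f(\chi)^{2-o(1)}$, to an ideal fixed by complex conjugation; equivalently, the ray class of $\mathfrak{a}\bar{\mathfrak{a}}^{-1}$ modulo $\mathfrak{g}$ lies in a fixed finite subgroup. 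This is the promised general form of Rohrlich's Main Lemma.

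Granting the congruence, the conclusion follows by a geometry-of-numbers estimate just as in Rohrlich's treatment. Because $\mathfrak{a}\neq\bar{\mathfrak{a}}$ and the group of fractional ideals of $K$ is torsion-free, $\mathfrak{a}\bar{\mathfrak{a}}^{-1}\neq\mathcal{O}$; combining its triviality modulo $\mathfrak{g}$ with the fact that it has norm $1$ yields a nonzero $\gamma\in K^\times$ with $|\gamma|\le 2$ whose associated fractional ideal is divisible by $\mathfrak{g}$ up to a denominator of norm at most $N\mathfrak{a}$ --- it is here that having the congruence for $\mathfrak{a}$ itself, rather than for $\mathfrak{a}^h$, is essential. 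Comparing archimedean size with norm gives $N\mathfrak{a}\gg N\mathfrak{g}^{1/2}\gg f(\chi)^{1-o(1)}$, hence $N\mathfrak{a}>f(\chi)^a$ for $f(\chi)$ large, which proves the proposition.

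The main obstacle will be the middle paragraph: passing rigorously from the trace identity to the congruence, in particular performing the local $h$-th-radical extraction uniformly in $\chi$, controlling the bounded ambiguities (units of $\mathcal{O}$, the prime-to-$\mathfrak{g}$ part of the class group, bounded torsion), and treating the Capelli $4\mid h$ case separately.
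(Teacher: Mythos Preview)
Your proposal has a genuine gap in the final step, and it is not the approach taken in the paper. The paper does \emph{not} finish by a geometry-of-numbers / product-formula comparison; it reduces (via the Main Lemma, Lemma~2) to a Diophantine approximation problem for pairs $(u,v)\in\mathbb{Z}^2$ and then invokes Ridout's $p$-adic Roth theorem. Roth is essential here and cannot be replaced by the elementary size comparison you sketch.

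Concretely: once one knows that $\epsilon_p(w^hw_0)$ has bounded $p$-power order (the paper obtains this from Lemma~1 on traces of roots of unity; your Capelli-style reasoning is aiming at the same conclusion but is vague as written), the Main Lemma gives $w\equiv \eta_p\,\omega_p x_p \pmod{q(\chi)\mathcal{O}\otimes\mathbb{Z}_p}$ with $\eta_p\in\mathbb{Z}_p^\times$ \emph{arbitrary}. Passing to $w/\bar w$ kills $\eta_p$ and indeed pins down $w/\bar w$ modulo $q$ up to the fixed local quantity $\omega_p x_p/\overline{\omega_p x_p}$. But this quantity is only algebraic of degree $\le 2mh$ over $\mathbb{Q}$ (Lemma~3 in the paper shows $\alpha_p,\beta_p$ satisfy a degree-$mh$ homogeneous relation); it does \emph{not} lie in $K$, and in particular there is no global ideal $\mathfrak{c}$ of bounded norm representing the corresponding ray class as $q(\chi)\to\infty$. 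Your $\gamma\in K^\times$ with $|\gamma|\le 2$ and numerator divisible by $\mathfrak{g}$ therefore does not exist in general. If one tries instead to run the product-formula argument with the genuine algebraic number $u\beta_p-v\alpha_p$, one gets only a Liouville-type bound $\max(|u|,|v|)\gg q^{1/d}$ with $d$ the degree, which is useless for Proposition~1 once $d>2$. Roth's theorem is exactly what upgrades the exponent to $\tfrac{1}{2+\varepsilon}$ independently of $d$, and this is why the paper (and Rohrlich's original) routes everything through Proposition~4 and Ridout's theorem.

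A secondary point: were your congruence claim literally true with a fixed global $\mathfrak{c}$ of bounded norm, your own computation would give $N\mathfrak{a}\gg N\mathfrak{g}\asymp f^2$, not $N\mathfrak{g}^{1/2}$; this is already incompatible with Proposition~2, which asserts that ideals with $N\mathfrak{a}$ only slightly larger than $f$ do occur. That tension is another signal that the congruence you state is stronger than what is actually available.
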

	\begin{proposition}
		There exist numbers $b>1,r<\frac{1}{2}$, such that if $f=f(\chi)$ is sufficiently large, then
		$$
		N(\chi,f^b)<f^r.
		$$
	\end{proposition}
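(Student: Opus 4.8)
\medskip

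\noindent\emph{Strategy.} The plan is to reduce the conditions defining $\mathcal{N}(\chi,t)$ to a single statement of Diophantine approximation for an element of $K^{\times}$ of controlled height, and then to invoke Ridout's theorem $\cite{Ridout}$. Given $\mathfrak{a}\in\mathcal{N}(\chi,f^{b})$, write $\mathfrak{a}=\mathfrak{d}\mathfrak{a}_{1}$ with $\mathfrak{d}=\gcd(\mathfrak{a},\bar{\mathfrak{a}})$ stable under complex conjugation and $\mathfrak{a}_{1},\bar{\mathfrak{a}}_{1}$ coprime; since $\mathfrak{a}\neq\bar{\mathfrak{a}}$ we have $\mathfrak{a}_{1}\neq(1)$. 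Fix, once and for all, a representative $\mathfrak{b}$ of each ideal class of minimal norm and coprime to every prime of $R(\phi)\cup P$; since the class group has order $h$, $\mathfrak{b}^{h}=\beta_{0}\mathcal{O}$ is principal, with $\beta_{0}$ ranging over a finite set. Taking $\mathfrak{b}$ in the class of $\mathfrak{a}_{1}$, we get $\mathfrak{a}_{1}=\gamma\mathfrak{b}$ with $\gamma\in K^{\times}$, $|\gamma|^{2}\asymp N\mathfrak{a}_{1}$, and we put $\tau=\gamma/\bar\gamma$. As $|\tau|=1$, the archimedean place contributes trivially to the Weil height, so $H(\tau)\asymp N\mathfrak{a}_{1}^{1/2}\leq N\mathfrak{a}^{1/2}\leq f^{b/2}$; moreover $\tau$ is not a root of unity because $\mathfrak{a}_{1}\neq\bar{\mathfrak{a}}_{1}$. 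This $\tau$ is what will be fed to Ridout's theorem: passing to the ratio $\gamma/\bar\gamma$ rather than to a generator of $\mathfrak{a}^{h}$ is what keeps the height $\ll f^{b/2}$ instead of $\ll f^{bh/2}$.

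Next I would unwind $\chi_{av}(\mathfrak{a})\neq 0$. Writing $\chi=\phi\rho$ and $\mathfrak{a}^{h}=\alpha\mathcal{O}$, one has $\chi(\mathfrak{a})=\nu\,\alpha^{1/h}$ for a suitable $h$-th root, where $\nu=\nu_{0}\rho(\mathfrak{a})$ with $\nu_{0}$ of order bounded in terms of $\phi$; since the conjugates of $\chi(\mathfrak{a})$ over $K$ are the $\nu^{\sigma}\zeta_{h}^{j(\sigma)}\alpha^{1/h}$, the condition $\chi_{av}(\mathfrak{a})\neq 0$ amounts to the nonvanishing of a weighted sum of roots of unity. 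The generalization of Rohrlich's Main Lemma $\cite{Roh84a}$ that I would prove --- working place by place at the primes dividing $\mathfrak{f}(\chi)$ and extracting $h$-th radicals there to cancel the $h$-th powers produced by $\mathfrak{a}\mapsto\mathfrak{a}^{h}$ --- says that this sum vanishes unless $\rho(\mathfrak{a})$ has order dividing a fixed integer. Since $\mathrm{ord}(\rho)$ is a product of prime powers supported on $R(\phi)\cup P$ and is $\asymp f$, the property ``$\rho(\mathfrak{a})$ of bounded order'' forces, for each prime $\mathfrak{p}$ of a divisor $\mathfrak{m}\mid\mathfrak{f}(\chi)$ with $N\mathfrak{f}(\chi)/N\mathfrak{m}$ bounded (hence $N\mathfrak{m}\asymp f^{2}$), a congruence $\gamma^{h}/\bar\gamma^{h}\equiv\mu_{\mathfrak{p}}\pmod{\mathfrak{p}^{e_{\mathfrak{p}}}}$ with $\mu_{\mathfrak{p}}$ in a fixed finite set of roots of unity and $\prod_{\mathfrak{p}}\mathfrak{p}^{e_{\mathfrak{p}}}=\mathfrak{m}$. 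Extracting an $h$-th root in the completion (losing only $O(1)$ in the exponent at primes above $h$), this becomes $\|\tau-\theta_{\mathfrak{p}}\|_{\mathfrak{p}}\leq(N\mathfrak{p})^{-(e_{\mathfrak{p}}-O(1))}$ with $\theta_{\mathfrak{p}}$ in a fixed finite set of roots of unity, and multiplying over the fixed finite set $S$ of primes above $R(\phi)\cup P$ yields $\prod_{\mathfrak{p}\in S}\|\tau-\theta_{\mathfrak{p}}\|_{\mathfrak{p}}\ll f^{-2}$.

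Finally I apply Ridout's theorem to $K$, the fixed finite set $S\cup\{\infty\}$, and each of the finitely many tuples $(\theta_{v})_{v}$: for any $\epsilon>0$ there are only finitely many $x\in K$ with $\prod_{v\in S\cup\{\infty\}}\min(1,\|x-\theta_{v}\|_{v})<H(x)^{-2-\epsilon}$, and consecutive solutions ordered by height satisfy a gap $H(x_{i+1})\geq H(x_{i})^{1+c(\epsilon)}$. The union of all these exceptional sets is one fixed finite set; a fixed $\tau$ that is not a root of unity has $\prod_{\mathfrak{p}\in S}\|\tau-\theta_{\mathfrak{p}}\|_{\mathfrak{p}}\gg 1$, which contradicts the bound $\ll f^{-2}$ once $f$ is large, so for $f$ large no $\tau$ constructed above lies in it. Taking $\epsilon=2$ we then get $f^{-2}\gg\prod_{v}\min(1,\|\tau-\theta_{v}\|_{v})\gg H(\tau)^{-4}$, whence $N\mathfrak{a}_{1}\asymp H(\tau)^{2}\gg f$. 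This already gives Proposition 2: there is no admissible $\mathfrak{a}$ with $N\mathfrak{a}\leq f^{a}$ once $a<1$. It also gives $N\mathfrak{d}=N\mathfrak{a}/N\mathfrak{a}_{1}\ll f^{b-1}$, so $\mathfrak{d}$ ranges over $\ll f^{b-1}$ ideals; and for each such $\mathfrak{d}$, the $\tau$'s attached to the valid $\mathfrak{a}_{1}$ with $N\mathfrak{a}_{1}\leq f^{b}/N\mathfrak{d}$ satisfy the Ridout inequality with some exponent $\epsilon'\in(0,\,4/b-2)$ (as $H(\tau)^{2+\epsilon'}\leq f^{b(2+\epsilon')/2}\ll f^{2}$), hence number $\ll\log\log f$ by the gap principle. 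Summing, $N(\chi,f^{b})\ll f^{b-1}\log\log f$, which is $<f^{r}$ for all large $f$ whenever $b-1<r<\tfrac12$; so any $b\in(1,\tfrac32)$ together with $r\in(b-1,\tfrac12)$ does the job.

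I expect the main obstacle to be the generalized Main Lemma of the third paragraph --- proving, uniformly in $\chi$, that the weighted sum of roots of unity coming from $\chi_{av}(\mathfrak{a})$ can vanish only when $\rho(\mathfrak{a})$ has bounded order --- which is where the delicate local analysis with $h$-th radicals is needed; a secondary, more bookkeeping difficulty is tracking the non-principal ideal classes (the element $\beta_{0}$) and the conjugation-stable part $\mathfrak{d}$ precisely enough that it is really $\tau$, of height $\ll f^{b/2}$, and not a generator of $\mathfrak{a}^{h}$, that enters Ridout's theorem.
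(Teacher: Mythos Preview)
Your route is genuinely different from the paper's. The paper (Section~3) fixes an ideal class with representative $\mathfrak{a}_0$, writes $\mathfrak{a}=w\mathfrak{a}_0$, proves the Main Lemma (Lemma~2) giving $w\equiv\eta_p\omega_px_p\pmod{q(\chi)}$ with $\eta_p\in\mathbb{Z}_p^\times$ and $\omega_px_p$ in a fixed finite set independent of $\chi$, then writes $w=u+v\tau_0$ and passes to the integer pair $(u,v)$, so that Ridout is applied over $\mathbb{Q}$ to the algebraic numbers $\alpha_p,\beta_p$ (made algebraic by Lemma~3); the actual counting bound $M(q,q^d)<q^s$ is not reproved but cited from Rohrlich \cite{Roh84a}. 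You instead split $\mathfrak{a}=\mathfrak{d}\mathfrak{a}_1$, form the ratio $\tau=\gamma/\bar\gamma\in K$, and apply Ridout directly over $K$; your count comes from bounding the conjugation-invariant part $\mathfrak{d}$ by $\ll f^{b-1}$ and the number of admissible $\tau$ by Ridout's finiteness. The ratio $\tau$ is a clean device---it kills the unknown $\eta_p\in\mathbb{Z}_p^\times$ automatically---and your counting argument is more self-contained than a citation. On the other hand the paper's reduction to $\mathbb{Z}^2$ keeps everything over $\mathbb{Q}$ and reuses Rohrlich's existing estimate verbatim.

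Two points to tighten. First, your targets $\mu_{\mathfrak p}$ and $\theta_{\mathfrak p}$ need not be roots of unity. Tracing through the Main Lemma, the target for $w/\bar w$ is $(\omega_px_p)/\overline{(\omega_px_p)}$, whose $mh$-th power equals $(\bar w_0/w_0)^m$; this element of $K$ lies on the unit circle but is typically not in $\mu_K$ when the class is nontrivial (e.g.\ in $K=\mathbb{Q}(\sqrt{-5})$ with $\mathfrak{a}_0^2=(2+\sqrt{-5})$ one gets $w_0/\bar w_0=(-1+4\sqrt{-5})/9\notin\{\pm1\}$). What you actually need---and what does hold---is that the $\theta_{\mathfrak p}$ lie in a fixed finite set of \emph{algebraic} numbers independent of $\chi$; Ridout then applies unchanged, and your exclusion of the exceptional set (``a fixed $\tau$ has $\prod\gg1$'') goes through with ``not a root of unity'' replaced by ``not equal to any $\theta_{\mathfrak p}$''. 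Second, the gap principle is both delicate for products over several places and unnecessary here: once the tuple $(\theta_{\mathfrak p})$ ranges over a fixed finite set independent of $\chi$, Ridout already gives that the admissible $\tau$ form one finite set, so their number is $O(1)$, not merely $O(\log\log f)$. This only strengthens your bound to $N(\chi,f^b)\ll f^{b-1}$, and the choice $1<b<\tfrac32$, $b-1<r<\tfrac12$ still works.
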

	
	Let $b>1,r<\frac{1}{2}$ be the constants in Proposition 2, choose $a$ such that $2r<a<1$. Let $x=f^a,y=f^b$. Applying Proposition 1 and ($\ref{1}$), we obtain 
	$$
	\sum_{\mathfrak{a}\neq\bar{\mathfrak{a}},\,N\mathfrak{a}\leq x}=0,
	$$
	when $f$ is sufficiently large.
	
	Using (\ref{2}) and Proposition 2, we see that
	$$
	\Bigl|\sum_{\mathfrak{a}\neq\bar{\mathfrak{a}},\,x< N\mathfrak{a}\leq y}\Bigr|\leq f^{r-\frac{a}{2}},
	$$
	when $f$ is sufficiently large. This term goes to 0 when $f$ goes to infinity since $r<\frac{a}{2}$. Finally, for ($\ref{3}$), we have
	$$
	\Bigl|\sum_{\mathfrak{a}\neq\bar{\mathfrak{a}},\,y<N\mathfrak{a}}\Bigr|\leq\sum_{f^b<n}c(n)n^{-\frac{1}{2}}e^{-\frac{n}{Af}},
	$$
	where $c(n)$ is the number of integral ideals of $K$ of norm $n$. It is well known that $c(n)=O(n^\epsilon),$ for every $\epsilon>0,$ so
	$$
	\Bigl|\sum_{\mathfrak{a}\neq\bar{\mathfrak{a}},\,y<N\mathfrak{a}}\Bigr|\leq C\sum_{f^b<n}e^{-\frac{n}{Af}},
	$$
	where C is a constant. Because
	$$
	\sum_{f^b<n}e^{-\frac{n}{Af}}=O(fe^{-\frac{1}{A}f^{b-1}})
	$$
	and $b>1$, we have ($\ref{3}$) also goes to 0 when $f$ goes to infinity.
	
	For $v=1$ case,
	\begin{equation}\label{9}
		I_1(u)=\int_{u}^{\infty}e^{-t}\log \frac{t}{u}dt=e^{-u}\log\frac{1}{u}+J(u),
	\end{equation}
	where
	\begin{equation}\label{5}
		J(u)=\int_{u}^{\infty}e^{-t}\log tdt.
	\end{equation}
	Therefore, the first term of right side of ($\ref{4}$) has the form
	\begin{align}
		\sum_{n\geq 1}\kappa(n)n^{-1}I_1(\frac{n^2}{Af})=&\log(Af)\sum_{n\geq 1}\kappa(n)n^{-1}e^{-\frac{n^2}{Af}}\notag\\
		&-2\sum_{n\geq 1}\kappa(n)(\log n)n^{-1}e^{-\frac{n^2}{Af}}\notag\\
		&+\sum_{n\geq 1}\kappa(n)n^{-1}J(\frac{n^2}{Af}).
	\end{align}
	We consider the three terms separately.
	
	We have pointed out the sum in the first term converges to $L(1,\kappa)\neq 0,$ so the first term is asymptotic to $2\log(Af)L(1,\kappa)$. For the second term, we apply Abel's theorem as well and know that
	\begin{equation}
		\sum_{n\geq 1}\kappa(n)(\log n)n^{-1}e^{-\frac{n^2}{Af}}
	\end{equation}
	converges to $L'(1,\kappa)$, when $f$ goes to infinity. In particular, this term is bounded. Finally, using summation by parts, we obtain
	\begin{equation}\label{6}
		\sum_{n\geq 1}\kappa(n)n^{-1}J(\frac{n^2}{Af})=\sum_{n\geq 1}S(n)\Bigl(J(\frac{n^2}{Af})-J(\frac{(n+1)^2}{Af})\Bigr),
	\end{equation}
	where
	\begin{equation}
		S(n)=\sum_{1\leq m\leq n}\kappa(m)m^{-1}.
	\end{equation}
	Here we have used two facts: the first one is that $S(n)$ is bounded since the corresponding series converges; the second one is that $J(\frac{n^2}{Af})$ goes to 0 for the fixed $f$ as $n$ goes to infinity because of ($\refeq{5}$). Also note that
	\begin{equation}
		\sum_{n\geq 1}\Bigl|J(\frac{n^2}{Af})-J(\frac{(n+1)^2}{Af})\Bigr|\leq \int_{0}^{\infty}e^{-t}|\log t|dt<\infty.
	\end{equation}
	So ($\refeq{6}$) is bounded. Combining the observations above, we conclude that the first term of the right side of ($\ref{4}$) is unbounded.
	
	We begin to handle the second term of right side of ($\ref{4}$). Firstly, note that there are two estimations
	\begin{equation}\label{7}
		0<I_1(u)\leq e^{-u}|\log u|+C_0\qquad(u>0),
	\end{equation}
	\begin{equation}\label{8}
		0<I_1(u)\leq e^{-u}\qquad(u\geq 1),
	\end{equation}
	where $C_0$ is a constant. Thus ($\ref{7}$) can be deduced by ($\ref{9}$). Using integration by parts on $J(u)$, we obtain
	$$
	I_1(u)=\int_{u}^{\infty}e^{-t}\frac{dt}{t}\leq\int_{u}^{\infty}e^{-t}dt=e^{-u},
	$$
	i.e. ($\ref{8}$).
	
	Now take $a,b,r$ as before, let $x=f^a,y=f^b,$ break
	\begin{equation}
		\sum_{\mathfrak{a}\neq\bar{\mathfrak{a}}}\chi_{av}(\mathfrak{a})N\mathfrak{a}^{-1}I_1(\frac{N\mathfrak{a}}{Af})
	\end{equation}
	into
	$$
	\sum_{\mathfrak{a}\neq\bar{\mathfrak{a}}}=\sum_{\mathfrak{a}\neq\bar{\mathfrak{a}},\,N\mathfrak{a}\leq x}+\sum_{\mathfrak{a}\neq\bar{\mathfrak{a}},\,x< N\mathfrak{a}\leq y}+\sum_{\mathfrak{a}\neq\bar{\mathfrak{a}},\,y<N\mathfrak{a}}.
	$$
	The first term is 0 when $f$ is sufficiently large by Proposition 1. 
	We have estimation for the second term
	\begin{equation}
		\Bigl|\sum_{\mathfrak{a}\neq\bar{\mathfrak{a}},\,x< N\mathfrak{a}\leq y}\Bigr|\leq N(\chi,y)x^{-\frac{1}{2}}\max_{\frac{f^{a-1}}{A}<u\leq\frac{f^{b-1}}{A}}I_1(u).
	\end{equation}
	Along with ($\ref{7}$), we obtain
	\begin{equation}
		\max_{\frac{f^{a-1}}{A}<u\leq\frac{f^{b-1}}{A}}I_1(u)\leq C_1\log f+C_2,
	\end{equation}
	where $C_1,C_2$ are positive constants. Combining with Proposition 2, we get
	\begin{equation}
		\Bigl|\sum_{\mathfrak{a}\neq\bar{\mathfrak{a}},\,x< N\mathfrak{a}\leq y}\Bigr|\leq f^{r-\frac{a}{2}}(C_1\log f+C_2),
	\end{equation}
	so this term goes to 0. Finally, we deal with the third term. Note that when $f$ is sufficiently large and $N\mathfrak{a}>y$,
	$$
	\frac{N\mathfrak{a}}{Af}>\frac{f^{b-1}}{A}>1.
	$$
	So $(\ref{8})$ can be applied. Therefore,
	\begin{equation}
		\Bigl|\sum_{\mathfrak{a}\neq\bar{\mathfrak{a}},\,y<N\mathfrak{a}}\Bigr|\leq\sum_{\mathfrak{a}\neq\bar{\mathfrak{a}},\,y<N\mathfrak{a}}N\mathfrak{a}^{-\frac{1}{2}}I_1(\frac{N\mathfrak{a}}{Af})\leq \sum_{\mathfrak{a}\neq\bar{\mathfrak{a}},\,y<N\mathfrak{a}}N\mathfrak{a}^{-\frac{1}{2}}e^{\frac{N\mathfrak{a}}{Af}}.
	\end{equation}
	We have met this estimation in $v=0$ case, and it goes to 0. So we complete the proof of $v=1$ case.
	
	Actually, we have proven the following asymptotic property of $L(1,\chi)_{av}$ when $f$ goes to infinity:
	$$
	\left\{	
	\begin{aligned}
		&L(1,\chi)_{av}\rightarrow2L(1,\kappa), \qquad\qquad\qquad\mbox{if}\ W(\chi)=1,\\
		&L'(1,\chi)_{av}\sim 2L(1,\kappa)\log(Af), \qquad\ \,\mbox{if}\  W(\chi)=-1.
	\end{aligned}\right.
	$$
	
	It remains to show Propositions 1 and 2.
	
	\section{Transition to Roth's theorem}
	We shall convert Propositions 1 and 2 into propositions about pairs of rational integers. 
	We begin with a lemma about the trace of a root of unity. 
	
	\begin{lemma}
		Let $F$ be a number field, $\xi$ be a root of unity of order N. Then there is a positive integer $\mu$ only depending on $F$, such that if there exists a prime $p$ satisfying $p^\mu|N,$ then ${\rm Tr}_{F(\xi)/F}(\xi)=0$.
	\end{lemma}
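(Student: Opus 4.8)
The plan is to reduce to the case where $\xi$ has prime‑power order by transitivity of the trace, then to bound the relevant Galois group by a ramification argument, and finally to evaluate an elementary sum of roots of unity. \emph{(Reduction to prime powers.)} Write $N=p^{k}m$ with $p\nmid m$; the hypothesis $p^{\mu}\mid N$ forces $k\geq\mu$. Pick integers $a,b$ with $am+bp^{k}=1$ and set $\xi_{p}=\xi^{am}$ and $\xi_{m}=\xi^{bp^{k}}$, so that $\xi_{p}$ has exact order $p^{k}$, $\xi_{m}$ has exact order $m$, and $\xi=\xi_{p}\xi_{m}$. Then $F(\xi)=F(\xi_{m})(\xi_{p})$, and putting $F'=F(\xi_{m})$ we have $\xi_{m}\in F'$, so transitivity of the trace gives
$$
{\rm Tr}_{F(\xi)/F}(\xi)={\rm Tr}_{F'/F}\!\bigl(\xi_{m}\cdot{\rm Tr}_{F'(\xi_{p})/F'}(\xi_{p})\bigr).
$$
Hence it is enough to show ${\rm Tr}_{F'(\xi_{p})/F'}(\xi_{p})=0$ once $k$ is large, with a threshold that is uniform in $m$ and in $p$.

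\emph{(The Galois group and a ramification bound.)} Since $F'(\xi_{p})=F'\cdot\mathbb{Q}(\zeta_{p^{k}})$ and $\mathbb{Q}(\zeta_{p^{k}})/\mathbb{Q}$ is Galois, the extension $F'(\xi_{p})/F'$ is Galois, and restriction identifies $H:={\rm Gal}(F'(\xi_{p})/F')$ with ${\rm Gal}(\mathbb{Q}(\zeta_{p^{k}})/L)$ for $L=\mathbb{Q}(\zeta_{p^{k}})\cap F'$. Through its action on $\xi_{p}$ this realizes $H$ as a subgroup of $(\mathbb{Z}/p^{k}\mathbb{Z})^{\times}$ of index $[L:\mathbb{Q}]$, and ${\rm Tr}_{F'(\xi_{p})/F'}(\xi_{p})=\sum_{a\in H}\xi_{p}^{\,a}$. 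I claim $[L:\mathbb{Q}]\leq[F:\mathbb{Q}]$. Indeed only $p$ ramifies in $\mathbb{Q}(\zeta_{p^{k}})/\mathbb{Q}$ and it is totally ramified there, hence $p$ is totally ramified in $L/\mathbb{Q}$, so $[L:\mathbb{Q}]$ is the ramification index of $p$ in $L$; since $p\nmid m$ the extension $F(\zeta_{m})/F$ is unramified above $p$, so the ramification index of $p$ in $F'=F(\zeta_{m})$ equals that of $p$ in $F$, which is at most $[F:\mathbb{Q}]$; as $L\subseteq F'$, the index of $p$ in $L$ divides it.

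\emph{(An elementary sum.)} Put $d=[F:\mathbb{Q}]$ and, for $\nu\geq1$, let $U_{\nu}=\{u\in(\mathbb{Z}/p^{k}\mathbb{Z})^{\times}:u\equiv1\bmod p^{\nu}\}$. For $k\geq2$ the subgroup $U_{2}$ is cyclic of $p$‑power order, and $[U_{2}:H\cap U_{2}]$ divides $[(\mathbb{Z}/p^{k}\mathbb{Z})^{\times}:H]\leq d$; writing this index as $p^{a}$ we get $p^{a}\leq d$, hence $a\leq\lfloor\log_{2}d\rfloor$, and (provided $a+2\leq k$, which holds since $k\geq\mu$) $H\cap U_{2}$ coincides with the unique subgroup $U_{a+2}$ of $U_{2}$ of that index. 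Thus $H\supseteq U_{\nu}$ with $\nu:=a+2\leq2+\lfloor\log_{2}d\rfloor$. Splitting $H$ into cosets $gU_{\nu}$ and using that $\sum_{u\in U_{\nu}}\xi_{p}^{\,gu}=\xi_{p}^{\,g}\sum_{t=0}^{p^{k-\nu}-1}\zeta^{t}$ with $\zeta=\xi_{p}^{\,gp^{\nu}}$ a root of unity of exact order $p^{k-\nu}$, each inner sum vanishes whenever $k>\nu$. Therefore ${\rm Tr}_{F'(\xi_{p})/F'}(\xi_{p})=0$ as soon as $k>2+\lfloor\log_{2}d\rfloor$, so the lemma holds with $\mu:=3+\lfloor\log_{2}[F:\mathbb{Q}]\rfloor$.

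\emph{(The main obstacle.)} Everything hinges on uniformity in $m$ and $p$: a priori the field $\mathbb{Q}(\zeta_{p^{k}})\cap F(\zeta_{m})$, equivalently the subgroup $H$, might grow with $m$, and it is exactly the hypothesis $p\nmid m$ — fed into the ramification step — that pins its index below $[F:\mathbb{Q}]$ regardless of $m$ and $p$. Granting that, the remaining bookkeeping with $U_{\nu}$, cyclotomic units, and finite geometric series is routine.
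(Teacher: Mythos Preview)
Your proof is correct and, while it shares the same underlying idea as the paper (control via ramification at $p$), it is organized differently. The paper splits into the cases $p\nmid[F:\mathbb{Q}]$ and $p\mid[F:\mathbb{Q}]$: in the first it shows directly that $[F(\xi):F(\xi^{p})]=p$, in the second it passes to the infinite extension $F^{p}=F(\mu_{p^{\infty}})$, locates $1+p^{\mu_p-1}\mathbb{Z}_p$ inside an inertia group, and again deduces $[F(\xi):F(\xi^{p})]=p$; in both cases the minimal polynomial $x^{p}-\xi^{p}$ forces the trace to vanish, and $\mu=\max_{p\mid[F:\mathbb{Q}]}\mu_p$. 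You instead give a single uniform argument: the ramification bound $[\,\mathbb{Q}(\zeta_{p^{k}})\cap F(\zeta_m):\mathbb{Q}\,]\le[F:\mathbb{Q}]$ (the crux, handling all primes $p$ and all $m$ at once) pins the index of $H$ in $(\mathbb{Z}/p^{k}\mathbb{Z})^{\times}$, you then locate a principal congruence subgroup $U_{\nu}\subseteq H$ with $\nu$ bounded solely in terms of $[F:\mathbb{Q}]$, and evaluate the trace as a geometric sum over cosets. What your route buys is uniformity (no case distinction on $p$), an explicit constant $\mu=3+\lfloor\log_{2}[F:\mathbb{Q}]\rfloor$, and avoidance of the auxiliary infinite tower $F^{p}$; what the paper's route buys is that the trace vanishing is read off immediately from a single degree-$p$ step without summing over $H$.
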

	\begin{proof}
		Firstly we prove that for the fixed prime $p$, there is a positive integer $\mu_p$ depending on $F$ and $p$, such that if $p^{\mu_{p}}|N$, then ${\rm Tr}_{F(\xi)/F}(\xi)$ $=0$.
		
		When $p\nmid[F:\mathbb{Q}]$, we show that we can take $\mu_{p}=2$. Suppose $p^2|N$, then $[\mathbb{Q}(\xi):\mathbb{Q}(\xi^p)]=p$. Since $F(\xi^p)/F$ is a Galois extension, we have $[F(\xi^p):\mathbb{Q}(\xi^p)]|[F:\mathbb{Q}]$, thus $[F(\xi^p):\mathbb{Q}(\xi^p)]$ is coprime to $p$. From these two points, we obtain
		\begin{align}
			p|[\mathbb{Q}(\xi):\mathbb{Q}(\xi^p)]\cdot[F(\xi):\mathbb{Q}(\xi)]&=[F(\xi):\mathbb{Q}(\xi^p)]\notag\\
			&=[F(\xi):F(\xi^p)]\cdot[F(\xi^p):\mathbb{Q}(\xi^p)],
		\end{align}
		so
		\begin{equation}\label{12}
			p|[F(\xi):F(\xi^p)].
		\end{equation}
		$x^p-\xi^p$ is an annihilating polynomial of $\xi$, so we see that $x^p-\xi^p$ is the minimal polynomial of $\xi$ over $F(\xi^p)$ by ($\ref{12}$), hence ${\rm Tr}_{F(\xi)/F}(\xi)=0$. 
		
		When $p|[F:\mathbb{Q}]$, let $\xi=\zeta\eta$, where $\zeta$ is a root of unity with order $p^k$, and $\eta$ is a root of unity with order coprime to $p$. Let $F^p,\mathbb{Q}^p$ respectively denote the field generated by all roots of unity of $p$-power order over $F,\mathbb{Q}$. View Gal$(F^p/F)$ as a subgroup of 
		Gal$(\mathbb{Q}^p/\mathbb{Q})\simeq\mathbb{Z}_{p}^{\times}$ through Gal$(F^p/F)\simeq {\rm Gal}(\mathbb{Q}^p/\mathbb{Q}\cap F)$. Take $\mu_{p}\geq 2$ such that the inertia subgroup at some prime of $F$ over $p$ contains $1+p^{\mu_{p}-1}\mathbb{Z}_p$. Then the ramification index of $F(\zeta)/F(\zeta^p)$ over this prime is $p$ when $k\geq \mu_{p}$. Since $F(\xi)/F(\zeta),F(\xi^p)/F(\zeta^p)$ are unramified at every prime over $p$, so the ramification index of $F(\xi)/F(\xi^p)$ at some prime over $p$ is $p$ when $p^{\mu_{p}}|N$. In particular, $[F(\xi):F(\xi^p)]=p,$ thus ${\rm Tr}_{F(\xi)/F}(\xi)=0$.
		
		Finally, let
		$$
		\mu=\max_{p|[F:\mathbb{Q}]}\mu_{p},
		$$
		and we obtain the constant $\mu$ satisfying our demand. 
	\end{proof}
	
	We define some notations before explaining the purpose of Lemma 1. Fix an ideal class of $K$. Let $\mathfrak{a}_{0}$ be a fractional ideal in this class, such that $\mathfrak{a}_0$ is coprime to $\mathfrak{f}(\chi)$ and every integral ideal in this class can be written as $w\mathfrak{a}_{0}(w\in \mathcal{O})$. Since class number is finite, we may additionally assume in the definition of $\mathcal{N}(\chi,t)$ that
	
	(4) $\mathfrak{a}$ is in the ideal class of $\mathfrak{a}_0$.
	
	\noindent Let $N(\chi,t)$ be the cardinality of $\mathcal{N}(\chi,t)$ again. Then we still only need to show the description of Propositions 1 and 2.	
	
	Let $\mathfrak{a}_{0}^h=w_{0}\mathcal{O}$, fix $p\in R$, then
	$$
	w_{0}\in (\mathcal{O}\otimes\mathbb{Z}_p)^\times\simeq\prod_{v|p}\mathcal{O}_v^\times.
	$$
	Recall
	\begin{equation}
		\mathcal{O}_v^\times\simeq\mu_{q-1}\oplus\mu_{p^a}\oplus \mathbb{Z}_{p}^{[k_v:\,\mathbb{Q}_{p}]},
	\end{equation}
	where $q$ is the cardinality of the residue field, $\mu$ with subscript denotes group of roots of unity of certain order, $a$ is a positive integer. Hence we have
	\begin{equation}
		(\mathcal{O}\otimes\mathbb{Z}_{p})^\times\simeq \mbox{finite group}\oplus\mathbb{Z}_{p}^2.
	\end{equation}
	
	Let
	\begin{equation}
		\epsilon=\prod_{p\in R}\epsilon_{p},
	\end{equation}
	where $\epsilon_{p}$ is a character of $(\mathcal{O}\otimes\mathbb{Z}_{p})^\times$, extended to $\mathcal{O}\otimes\mathbb{Z}_{p}$ by requiring it to be 0 on nonunits.
	This decomposition can be found in Lecture 2 of $\cite{Rohrnotes}$, $\epsilon_{p}$ is the component of the finite type of $\chi$ over $v(v|p)$ in the idelic definition of $\chi$.
	
	Now we explain the purpose of Lemma 1. Let $\mathfrak{a}=w\mathcal{O}$. Note that	
	$$
	\chi(\mathfrak{a})^h=\chi(\mathfrak{a}^h)=\chi(w^hw_0\mathcal{O})=\epsilon(w^hw_0)w^hw_0,
	$$
	thus $\chi(\mathfrak{a})$ has the form $\zeta w\sqrt[h]{w_0}$, where $\sqrt[h]{w_0}$ is a fixed $h$-th radical of $w_0$ in $\mathbb{C}$ (we do not care which one is it specifically), $\zeta$ is a root of unit. Hence
	\begin{align}
		\chi_{av}(\mathfrak{a})&={\rm Tr}_{K(\chi)/K}(\zeta w\sqrt[h]{w_0})\notag\\
		&=\frac{1}{n}{\rm Tr}_{K(\chi,\sqrt[h]{w_0})/K}(\zeta w\sqrt[h]{w_0})\notag\\
		&=\frac{1}{n}{\rm Tr}_{K(\sqrt[h]{w_0})/K}({\rm Tr}_{K(\chi,\sqrt[h]{w_0})/K(\sqrt[h]{w_0})}(\zeta w\sqrt[h]{w_0}))\notag\\
		&=\frac{1}{n}{\rm Tr}_{K(\sqrt[h]{w_0})/K}(w\sqrt[h]{w_0}{\rm Tr}_{K(\chi,\sqrt[h]{w_0})/K(\sqrt[h]{w_0})}(\zeta)),
	\end{align}
	where $n=[K(\chi,\sqrt[h]{w_0}):K(\chi)]$.
	Therefore,
	$$
	\chi_{av}(\mathfrak{a})\neq 0\Rightarrow {\rm Tr}_{K(\chi,\sqrt[h]{w_0})/K(\sqrt[h]{w_0})}(\zeta)\neq 0.
	$$
	
	Applying Lemma 1 to $K(\sqrt[h]{w_0})$, we obtain: 
	
	\emph{There exists a positive integer $\mu_1$, such that $\chi_{av}(\mathfrak{a})\neq 0\Rightarrow$ the order of $\zeta$ is not divisible by $p^{\mu_1}$ for any prime $p$.}
	
	\noindent And thus:
	
	\emph{There exists a positive integer $\mu_2,$ such that $\chi_{av}(\mathfrak{a})\neq 0\Rightarrow$ the order of $\epsilon(w^hw_0)=\zeta^h$ is not divisible by $p^{\mu_2}$ for any prime $p$,}
	
	\noindent Therefore:
	
	\emph{There exists a positive integer $\mu,$ such that $\chi_{av}(\mathfrak{a})\neq 0\Rightarrow$ the order of $\epsilon_{p}(w^hw_0)$ is not divisible by $p^\mu$ for any $p\in R$.}
	
	This is because for every $w\in \mathcal{O}$,
	$$
	\epsilon(w)=\epsilon_{p}(w)\prod_{l\in R,l\neq p}\epsilon_{l}(w),
	$$
	and the power of $p$ in the order of $\epsilon_{l}(w)$ is controlled by a constant because $(\mathcal{O}\otimes\mathbb{Z}_l)^\times$ is the product of a finite group and a pro-$l$ group. Thus the statement that the order of $\epsilon(w)$ is divisible by $p^\mu$ is equivalent to that the order of $\epsilon_p(w)$ is divisible by $p^\mu$ when $\mu$ is sufficiently large.
	
	The assertion we shall use later is:
	
	\emph{There exists a positive integer $\mu,$ such that $\chi_{av}(\mathfrak{a})\neq 0\Rightarrow$ the order of $\epsilon_{p}(w^hw_0)$ is not divisible by $p^\mu$ for any $p\in R$.}
	
	We view $\mathbb{Z}_p^\times\simeq(\mathbb{Z}\otimes\mathbb{Z}_p)^\times$ as a subgroup of $(\mathcal{O}\otimes\mathbb{Z}_p)^\times$, and define
	$$
	H_p=\{a\in(\mathcal{O}\otimes\mathbb{Z}_p)^\times|\ a^m\in\mathbb{Z}_p^\times\ \mbox{for some positive integer}\ m\},
	$$
	then $\mathbb{Z}_p^\times$ is of finite index in $H_p$. For every $p\in P$, fix a set $\Omega_p$ of representatives of $H_p/\mathbb{Z}_p^\times$.
	Define
	$$
	S_p=\{a\in (\mathcal{O}\otimes\mathbb{Z}_p)^\times|\ a^h\in H_pw_0^{-1}\}.
	$$
	$S_p$ may be empty. When $S_p$ is not empty, $\forall x_1,x_2\in S_p$, we have $x_1^hx_2^{-h}\in H_p$, thus $x_1x_2^{-1}\in H_p$. We fix $x_p\in S_p$ if $S_p$ is not empty, then every element in $S_p$ can be written as $\eta_p\omega_p x_p,$ where $\eta_p\in\mathbb{Z}_p^\times,\omega_p\in\Omega_p$. Let 
	$$
	P'=\{p\in P|\ S_p\ \mbox{is not empty}\}.
	$$
	\begin{lemma}{\rm (Main Lemma)}.
		For every $\chi\in Y$, there exists a positive integer 
		$$
		q(\chi)=\prod_{p\in P\cap R}p^{n_p(\chi)},
		$$
		satisfying the following conditions:
		
		{\rm (1)} Assume $\chi_{av}(\mathfrak{a})\neq 0$ for some $\mathfrak{a}=w\mathfrak{a}_0$, then for every $p|q(\chi)$, we have $p\in P'$, and there exist $\eta_p\in \mathbb{Z}_p^\times,\omega_p\in \Omega_p$ such that
		$$
		w\equiv \eta_p\omega_px_p\mod q(\chi)\mathcal{O}\otimes\mathbb{Z}_p.
		$$
		
		{\rm (2)} $f(\chi)\leq k_0q(\chi),q(\chi)\leq k_1f(\chi)$, where $k_0,k_1$ are positive integers independent of $\chi$.
	\end{lemma}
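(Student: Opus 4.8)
The plan is to use the structure of the local unit groups $(\mathcal{O}\otimes\mathbb{Z}_p)^\times$, together with the assertion recorded immediately above the statement, to convert the nonvanishing of $\chi_{av}(\mathfrak{a})$ into a congruence on $w$ modulo a $p$-power whose exponent is commensurable, uniformly in $\chi$, with the $p$-part of $f(\chi)$. Two harmless reductions come first. Fix $\mathfrak{a}_0$, hence $w_0$ with $\mathfrak{a}_0^h=w_0\mathcal{O}$, once and for all coprime to $R$. If $\chi_{av}(\mathfrak{a})=0$ for every $\mathfrak{a}=w\mathfrak{a}_0$ then condition (1) is vacuous and any integer $q(\chi)$ with $f(\chi)\leq k_0q(\chi)$ and $q(\chi)\leq k_1f(\chi)$ will do; so assume such an $\mathfrak{a}$ exists, and fix $\mu$ as in the assertion, so that $\chi_{av}(w\mathfrak{a}_0)\neq 0$ implies $p^\mu\nmid\mathrm{ord}(\epsilon_p(w^hw_0))$ for every $p\in R$, noting that such $w$ then lies in $G_p:=(\mathcal{O}\otimes\mathbb{Z}_p)^\times$ for all $p\in R$. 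Secondly, the primes $p\in R\setminus P$ have $\epsilon_p$-conductor dividing that of $\phi$ (as $\rho$ is unramified outside $P$), hence bounded independently of $\chi$; they contribute only a bounded factor to $f(\chi)=(N\mathfrak{f}(\chi))^{1/2}$ and will not divide $q(\chi)$, so they get absorbed into $k_0,k_1$ and are ignored from now on.

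The heart is the local analysis at $p\in P\cap R$. Write $G_p\cong\Delta_p\times U_p$ with $\Delta_p$ finite and $U_p$ pro-$p$, free of rank $2$ over $\mathbb{Z}_p$ up to a bounded finite factor. Since $\chi(n\mathcal{O})=\kappa(n)n$, the restriction $\epsilon_p|_{\mathbb{Z}_p^\times}$ is the local component $\kappa_p$ of $\kappa$, whose conductor divides the discriminant of $K$; consequently the $p$-primary part $\psi_p$ of $\epsilon_p$ is trivial on $\mathbb{Z}_p^\times$ and, up to a factor of bounded order, on $\mathrm{tors}(G_p)$, hence factors through the free rank-one quotient $\bar{G}_p$ of $G_p/\mathbb{Z}_p^\times$, which together with the image $\bar{w}_0$ of $w_0$ is independent of $\chi$. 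Let $p^{e_p(\chi)}$ be the order of $\psi_p$; then $e_p(\chi)$ differs by an amount independent of $\chi$ from the $p$-exponent of $f(\chi)$. Identifying $\bar{G}_p\cong\mathbb{Z}_p$ and writing $\bar{w}$ for the image of $w$, the hypothesis $p^\mu\nmid\mathrm{ord}(\epsilon_p(w^hw_0))$ says $\psi_p\big((w^hw_0)^{p^{\mu-1}}\big)=1$, i.e. $p^{\mu-1}(h\bar{w}+\bar{w}_0)\equiv 0\pmod{p^{e_p(\chi)}}$, hence $h\bar{w}\equiv-\bar{w}_0\pmod{p^{\,e_p(\chi)-\mu+1}}$. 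As $v_p(h)$ is a fixed constant, this is either unsolvable once $e_p(\chi)$ is large — forcing $\chi_{av}\equiv0$, the vacuous case — or it pins $\bar{w}$ to a single residue class modulo $p^{\,e_p(\chi)-\mu+1-v_p(h)}$. Translating back, $w^hw_0\in\mathbb{Z}_p^\times\cdot\mathrm{tors}(G_p)\subseteq H_p$ modulo $p^{\,e_p(\chi)-O(1)}$, i.e. $w^h\in H_pw_0^{-1}$ modulo $p^{\,e_p(\chi)-O(1)}$; in particular, once $e_p(\chi)$ exceeds a uniform threshold this congruence lifts to an honest identity inside the open subgroup $G_p^h$ and the closed subgroup $H_p$, whence $S_p\neq\emptyset$ and $p\in P'$.

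It remains to extract $h$-th roots and to assemble $q(\chi)$. Fixing $x_p\in S_p$, one has $(wx_p^{-1})^h=(w^hw_0)(x_p^hw_0)^{-1}\in H_p$ modulo $p^{\,e_p(\chi)-O(1)}$, and since $G_p/H_p$ is the product of $\mathbb{Z}_p$ with a bounded finite group, the implication ``$a^h\in H_p\ (\mathrm{mod}\ p^N)\Rightarrow a\in H_p\ (\mathrm{mod}\ p^{\,N-v_p(h)-O(1)})$'' holds with constants independent of $\chi$ — this is the promised generalization of Rohrlich's Main Lemma to $h$-th radicals. Hence $w\in H_px_p=\bigsqcup_{\omega\in\Omega_p}\mathbb{Z}_p^\times\omega x_p$ modulo $p^{\,n_p(\chi)}$, where $n_p(\chi):=e_p(\chi)-O(1)$, so $w\equiv\eta_p\omega_px_p\pmod{p^{\,n_p(\chi)}\mathcal{O}\otimes\mathbb{Z}_p}$ for suitable $\eta_p\in\mathbb{Z}_p^\times$, $\omega_p\in\Omega_p$. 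Taking $q(\chi):=\prod_{p\in P\cap R,\,n_p(\chi)\geq1}p^{\,n_p(\chi)}$ then yields (1), while (2) follows because $f(\chi)$ is commensurable with $\prod_{p\in P\cap R}p^{\,e_p(\chi)}$ and $\#(P\cap R)$ is bounded, so $q(\chi)$ and $f(\chi)$ differ by a bounded multiplicative factor, giving suitable integers $k_0,k_1$.

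The step I expect to be the main obstacle is the uniformity of the two rounding operations — passing from the order bound on $\epsilon_p(w^hw_0)$ to the mod-$p^{\,e_p(\chi)-O(1)}$ congruence on $w^h$, and then extracting an $h$-th root so as to place $w$ itself in a single $\mathbb{Z}_p^\times\Omega_p$-coset — with every implied constant independent of $\chi$. This is exactly where $h>1$ bites, Rohrlich's case $h=1$ being immediate: one must control the $h$-torsion of $G_p/H_p$, which is $p$-divisible when $p\mid h$; handle the bounded $p$-torsion of the local units $\mathcal{O}_v^\times$, relevant chiefly at $p=2$; and, in the ramified case, reconcile the $\mathfrak{p}$-adic and $p$-adic valuations. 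Each of these is only a bounded correction, but keeping all of them bounded uniformly over $\chi\in Y$ is the real work.
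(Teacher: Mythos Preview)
Your proposal is correct and follows essentially the same strategy as the paper: both arguments exploit that $\epsilon_p|_{\mathbb{Z}_p^\times}=\kappa_p$ is quadratic to identify the kernel of (the $p$-part of) $\epsilon_p$ on principal units, translate the bound $p^\mu\nmid\mathrm{ord}\,\epsilon_p(w^hw_0)$ into a congruence on $w^hw_0$, and then extract an $h$-th root with a controlled loss in the modulus. The only difference is presentational: the paper works multiplicatively inside the explicit filtration $1+p^3\mathcal{O}\otimes\mathbb{Z}_p$, computing the kernel as $(1+p^3\mathbb{Z}_p)(1+p^{3+o_p(\chi)}\mathcal{O}\otimes\mathbb{Z}_p)$ and taking literal $m$-th and $h$-th roots there, whereas you pass to the rank-one additive quotient $\bar G_p=G_p/H_p\cong\mathbb{Z}_p$ and phrase everything as linear congruences, which makes the root-extraction step transparent (division by $h$) at the cost of carrying the torsion corrections as unspecified $O(1)$'s; one small slip is the claim that $\psi_p$ is trivial on $\mathbb{Z}_p^\times$, which can fail at $p=2$, but since $\kappa_2$ has order at most $2$ this is absorbed into your bounded corrections exactly as the paper's choice of $1+p^3$ absorbs it.
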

	
	\begin{proof}
		Note that we may assume $\mu$ is large enough such that $p^\mu$ annihilates the Sylow $p$-subgroup of $(\mathcal{O}\otimes\mathbb{Z}_p)^\times/(1+p^3\mathcal{O}\otimes\mathbb{Z}_p)$. For every $\chi\in Y$, define $m_p(\chi),\,p\in R$ through 
		\begin{equation}
			N\mathfrak{f}(\chi)=\prod_{p\in R}p^{m_p(\chi)}.
		\end{equation}
		Let
		\begin{equation}\label{10}
			\mbox{order of}\ \epsilon_{p}|_{(1+p^3\mathcal{O}\otimes\mathbb{Z}_p)}=p^{o_p(\chi)}.
		\end{equation}
		For every $p\in P\cap R$, define
		\begin{equation}
			n_p(\chi)=\left\{
			\begin{aligned}
				&0,\qquad&\mbox{if}\ o_p(\chi)\leq\mu+h,\\
				&o_p(\chi)-\mu-h,\qquad& \mbox{if}\ o_p(\chi)>\mu+h.
			\end{aligned}\right.
		\end{equation}
		
		Now we analyse the kernel of $\epsilon_{p}|_{(1+p^3\mathcal{O}\otimes\mathbb{Z}_p)}$. The restriction of $\epsilon_{p}$ on $\mathbb{Z}_p^\times$ is $\kappa_p$, which is the $p$-component of $\kappa$, hence a quadratic character. In particular, $\epsilon_{p}$ is trivial on $1+p^3\mathbb{Z}_p$, because every element in $1+p^3\mathbb{Z}_p$ is a square. On the other hand, $\epsilon_{p}$ is also trivial on $1+p^{3+o_p(\chi)}\mathcal{O}\otimes\mathbb{Z}_p$, because we have ($\ref{10}$) and every element in $1+p^{3+o_p(\chi)}\mathcal{O}\otimes\mathbb{Z}_p$ is a $p^{o_p(\chi)}$-th power. Since
		$$
		(1+p^3\mathcal{O}\otimes\mathbb{Z}_p)/(1+p^3\mathbb{Z}_p)(1+p^{3+o_p(\chi)}\mathcal{O}\otimes\mathbb{Z}_p)
		$$
		is a cyclic group of order $p^{o_p(\chi)}$, we obtain
		\begin{equation}\label{13}
			\mbox{kernel of}\ \epsilon_{p}|_{(1+p^3\mathcal{O}\otimes\mathbb{Z}_p)}=(1+p^3\mathbb{Z}_p)(1+p^{3+o_p(\chi)}\mathcal{O}\otimes\mathbb{Z}_p).
		\end{equation}
		
		We begin to verify (1). $p|q(\chi)$ means $o_p(\chi)>\mu+h$. $\chi_{av}(\mathfrak{a})\neq 0\Rightarrow p^\mu\nmid \mbox{order of}\ \epsilon_{p}(w^hw_0)$. Since $p^\mu$ annihilates the Sylow $p$-subgroup of $(\mathcal{O}\otimes\mathbb{Z}_p)^\times/(1+p^3\mathcal{O}\otimes\mathbb{Z}_p)$ as we have assumed, we have
		\begin{equation}
			w^{mh}w_0^m\in(1+p^3\mathcal{O}\otimes\mathbb{Z}_p),
		\end{equation}
		where $m=jp^\mu,\,j$ is a positive integer coprime to $p$.
		
		The order of $\epsilon_{p}(w^hw_0)$ is not divisible by $p^\mu$, so the order of $\epsilon_{p}(w^{mh}w_0^m)$ is coprime to $p$. Then $\epsilon_{p}(w^{mh}w_0^m)$ has to be 1 as the order of $\epsilon_{p}(w^{mh}w_0^m)$ is a power of $p$. Because of (\ref{13}), we can assume		
		\begin{equation}
			zw^{mh}w_0^m\in 1+p^3\mathbb{Z}_p,\qquad z\in 1+p^{3+o_p(\chi)}\mathcal{O}\otimes\mathbb{Z}_p.
		\end{equation}
		Write $z$ as
		\begin{equation}
			z=z_1^m,\qquad z_1\in1+p^{n_p(\chi)+h}\mathcal{O}\otimes\mathbb{Z}_p.
		\end{equation}
		So
		\begin{equation}
			(z_1w^hw_0)^m\in 1+p^{3}\mathbb{Z}_p.
		\end{equation}
		In particular,
		\begin{equation}
			z_1w^hw_0=\eta_p'\omega_p',\qquad \eta_p'\in\mathbb{Z}_p^\times,\ \omega_p'\in\Omega_p.
		\end{equation}
		Write $z_1$ as
		\begin{equation}
			z_1=z_2^h,\qquad z_2\in1+p^{n_p(\chi)}\mathcal{O}\otimes\mathbb{Z}_p,
		\end{equation}
		then
		\begin{equation}
			(z_2w)^h=\eta_p'\omega_p'w_0^{-1}.
		\end{equation}
		So $S_p$ is not empty, i.e. $p\in P'$, and we can suppose
		\begin{equation}
			z_2w=\eta_p\omega_px_p,\qquad \eta_p\in\mathbb{Z}_p^\times,\ \omega_p\in\Omega_p.
		\end{equation}
		Hence we obtain the congruence equation
		\begin{equation}
			w\equiv\eta_p\omega_px_p\mod q\mathcal{O}\otimes\mathbb{Z}_p.
		\end{equation}
		
		Now we verify (2). For every $p\in P\cap R,$ let $\mathfrak{f}(\epsilon_{p})$ be the conductor of $\epsilon_{p}$, i.e. the maximal integral ideal of $k$ such that 
		\begin{equation}
			\epsilon_{p}|_{(1+\mathfrak{f}(\epsilon_{p})\otimes\mathbb{Z}_p)}=1,
		\end{equation}
		then $N\mathfrak{f}(\epsilon_{p})=p^{m_p(\chi)}$.
		
		If $o_p(\chi)\leq \mu+h,$ then $n_p(\chi)=0,\ \epsilon_{p}$ is trivial on $1+p^{3+\mu+h}\mathcal{O}\otimes\mathbb{Z}_p$. Hence
		\begin{equation}\label{14}
			\Bigl|\frac{m_p(\chi)}{2}-n_p(\chi)\Bigr|\leq3+\mu+h.
		\end{equation}
		
		If $o_p(\chi)> \mu+h$, then $n_p(\chi)=o_p(\chi)-\mu-h$, $\epsilon_p$ is trivial on $1+p^{3+o_p(\chi)}\mathcal{O}\otimes\mathbb{Z}_p$, but is nontrivial on $1+p^{2+o_p(\chi)}\mathcal{O}\otimes\mathbb{Z}_p$. So we also have
		$$
		\Bigl|\frac{m_p(\chi)}{2}-n_p(\chi)\Bigr|\leq3+\mu+h.
		$$
		
		Let
		\begin{equation}
			k=\prod_{p\in R,\,p\notin P}p^{\frac{m_p(\chi)}{2}},
		\end{equation}
		\begin{equation}
			k'=\prod_{p\in P\cap R}p^{3+\mu+h}.
		\end{equation}
		Let $k_0=kk',k_1=k'/k$. Note that  
		\begin{equation}
			f(\chi)=\prod_{p\in R,\,p\notin P}p^{\frac{m_p(\chi)}{2}}\cdot\prod_{p\in P\cap R}p^{\frac{m_p(\chi)}{2}},
		\end{equation}
		\begin{equation}
			q(\chi)=\prod_{p\in P\cap R}p^{n_p(\chi)}.
		\end{equation}
		Along with ($\ref{14}$), we obtain (2).
	\end{proof}
	
	If every integral ideal $\mathfrak{a}$ in the ideal class of $\mathfrak{a}_0$ satifies $\chi_{av}(\mathfrak{a})=0,$ then Propositions 1 and 2 hold obviously. We can always assume that there exists $\mathfrak{a}$ such that $\chi_{av}(\mathfrak{a})\neq 0,$ thus (1) of the Main lemma implies that $q(\chi)$ is only divisible by primes in $P'$. 
	
	Now we begin to apply the Main Lemma. Let 
	$$
	\Omega=\prod_{p\in P'}\Omega_p,
	$$
	which is a finite set. For $\omega\in\Omega,p\in P'$, let $\omega_p$ be the $p$-component of $\omega$. Take $\omega\in\Omega$, positive real number $t$, and a positive integer $q$ which is only divisible by primes in $P'$. Define $\mathcal{N}_\omega(q,t)$ to be the set of $w\in \mathcal{O}$ satisfying the following conditions:
	
	(1) For every $p\in P',$ there exists $\eta_p\in\mathbb{Z}_p^\times$ such that
	$$
	w\equiv\eta_p\omega_p x_p \mod q\mathcal{O}\otimes\mathbb{Z}_p;
	$$
	
	(2) $w^hw_0\mathcal{O}\neq\overline{w^hw_0}\mathcal{O};$
	
	(3) $|w|\leq t$.
	
	\noindent Let $N_\omega(q,t)$ be the cardinality of $\mathcal{N}(q,t)$.
	
	Recall that we have modified the definition of $\mathcal{N}(\chi,t)$ to be the set of integral ideals of $K$ satifying:
	
	(1) $\chi_{av}(\mathfrak{a})\neq 0;$
	
	(2) $\mathfrak{a}\neq\bar{\mathfrak{a}};$
	
	(3) $N\mathfrak{a}\leq t;$
	
	(4) $\mathfrak{a}$ is in the ideal class of $\mathfrak{a}_0$.
	
	In view of the Main Lemma, there exists a map  
	$$
	\mathcal{N}(\chi,t)\rightarrow\bigcup_{\omega\in\Omega}\mathcal{N}_{\omega}(q(\chi),lt^\frac{1}{2}).
	$$
	The map assigns to each $\mathfrak{a}=w\mathfrak{a}_0$ an arbitrarily chosen generator $w$, and the constant $l=|w_0|^{-\frac{1}{h}}$. The map is injective since these ideals are uniquely determined by their generators. So
	$$
	N(\chi,t)\leq\sum_{\omega\in\Omega}N_\omega(q(\chi),lt^\frac{1}{2}).
	$$
	Therefore it suffices to show the following form of Propositions 1 and 2. 
	\begin{proposition}{\rm (Second form of Propositions 1 and 2)}.
		Fix $c<\frac{1}{2}$, if $q$ is sufficiently large, then
		$$
		N_\omega(q,q^c)=0.
		$$
		Moreover, there exist absolute constants $d>\frac{1}{2},s<\frac{1}{2},$ such that if $q$ is sufficiently large, then
		$$
		N_\omega(q,q^d)<q^s.
		$$
	\end{proposition}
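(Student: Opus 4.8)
The plan is to convert $\mathcal{N}_\omega(q,t)$ into a problem about short vectors of an explicit lattice and then invoke Ridout's theorem. Fix a $\mathbb{Z}$-basis $\{1,\omega\}$ of $\mathcal{O}$ and write $w=a+b\omega$, $(a,b)\in\mathbb{Z}^2$. Since $H_p$ and each nonempty $S_p$ are open in $(\mathcal{O}\otimes\mathbb{Z}_p)^\times$ and $K$ is dense in $\mathcal{O}\otimes\mathbb{Q}_p$, we may choose the representatives $\Omega_p$ and the fixed elements $x_p$ inside $K$ and, moreover, in ``general position'' in the sense that for each $\omega_p\in\Omega_p$ the point $[\omega_px_p]\in\mathbb{P}^1(\mathbb{Q})$ has height $>p$ (this is legitimate because the Main Lemma holds for any such choice). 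Writing $\omega_px_p=u_p+\tilde u_p\omega$ with $u_p,\tilde u_p\in\mathbb{Z}_{(p)}$ not both divisible by $p$, and eliminating the unknown $\eta_p$, condition (1) in the definition of $\mathcal{N}_\omega(q,t)$ becomes: for every prime power $p^{n_p}\parallel q$ one has $a\tilde u_p\equiv bu_p\ (\mathrm{mod}\ p^{n_p})$ together with $p\nmid\gcd(a,b)$. Hence $(a,b)$ lies in a sublattice $\Lambda=\Lambda(q,\omega)\subseteq\mathbb{Z}^2$ of index $q$, with $\gcd(a,b)$ prime to $q$ and with local directions $\theta_p:=[u_p:\tilde u_p]$ drawn from one fixed finite set $T_p$ independent of $\chi$; condition (2) removes only the finitely many rational lines through the origin along which $w^hw_0\mathcal{O}=\overline{w^hw_0}\mathcal{O}$ (the \emph{excluded lines}). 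Note that (1) also forces $w$, and hence $\gcd(a,b)$, to be prime to $q$; recall $w_0$ is prime to every prime of $P$, so $w^hw_0$ is prime to $q$.

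\textbf{Transition to Ridout.} To $w=a+b\omega\in\mathcal{N}_\omega(q,t)$ attach $x=[a:b]\in\mathbb{P}^1(\mathbb{Q})$; after clearing the common factor (prime to $q$) one has $H(x)\ll t$, and the congruences give $\rho_p(x,\theta_p)\le p^{-n_p}$ in the $p$-adic chordal metric for every $p\mid q$, so $\prod_{p\mid q}\rho_p(x,\theta_p)\le q^{-1}$, while $\rho_v(x,\cdot)\le 1$ at the remaining places of the fixed finite set $S=\{\infty\}\cup P'$. Fix $\varepsilon>0$ and partition according to the finitely many tuples $(\theta_p)_{p\in P'}\in\prod_pT_p$; for each tuple Ridout's theorem \cite{Ridout} gives a finite set of exceptions outside which $\prod_{v\in S}\rho_v(x,\theta_v)\ge c_\varepsilon H(x)^{-2-\varepsilon}$. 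Combining, every $w\in\mathcal{N}_\omega(q,t)$ with $[w]$ outside a fixed finite exceptional set $E$ satisfies
$$q\ \le\ c_\varepsilon^{-1}H([w])^{2+\varepsilon}\ \ll\ t^{2+\varepsilon}.$$
Because the $\theta_p$ were chosen of height $>p$, for each of the finitely many $[w]$ that lie in $E$ or on an excluded line the defining conditions of $\mathcal{N}_\omega(q,t)$ either force a divisibility $p^{n_p}\mid(\text{fixed nonzero integer})$ or violate (2), hence fail once $q$ is large.

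\textbf{The two assertions.} For the first, take $t=q^c$ with $c<\tfrac12$ and $\varepsilon$ so small that $c(2+\varepsilon)<1$; then a non-excluded $w\in\mathcal{N}_\omega(q,q^c)$ would give $q\ll q^{c(2+\varepsilon)}$, bounding $q$, and with the last sentence above this yields $N_\omega(q,q^c)=0$ for all large $q$. For the second, fix $d$ with $\tfrac12<d<\tfrac34$, then $s$ with $2d-1<s<\tfrac12$, then $c\in(d-s,\tfrac12)$, and set $t=q^d$. If the box $\max(|a|,|b|)\le q^d$ contains two $\mathbb{R}$-independent vectors of $\Lambda$, then its second successive minimum is $\ll q^d$, and since $\mu_1\mu_2\asymp\operatorname{covol}(\Lambda)=q$ by Minkowski's second theorem, $\#(\Lambda\cap\text{box})\ll(1+q^d/\mu_1)(1+q^d/\mu_2)\asymp q^{2d}/(\mu_1\mu_2)\asymp q^{2d-1}$. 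Otherwise all lattice points in the box lie on the line $\mathbb{R}\mathbf{v}_1$ of a shortest vector $\mathbf{v}_1$; if $\gcd(\mathbf{v}_1)$ is prime to $q$ then $\mathbf{v}_1$ is primitive and lies in $\mathcal{N}_\omega(q,|\mathbf{v}_1|)$ unless $\mathbb{R}\mathbf{v}_1$ is excluded, so by the first assertion $|\mathbf{v}_1|>q^{c}$ for $q$ large, leaving at most $\ll q^{d-c}$ of its multiples in the box; and if $\gcd(\mathbf{v}_1)$ shares a factor with $q$, or $\mathbb{R}\mathbf{v}_1$ is excluded, no multiple of $\mathbf{v}_1$ is counted. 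In all cases $N_\omega(q,q^d)\ll q^{2d-1}+q^{d-c}<q^{s}$ for $q$ large, which is the required bound.

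\textbf{Main obstacle.} The crux is the second step. Since $\chi$ has no direct description on non-principal ideals, the quantities one must approximate at the primes of $q$ are not visible on the nose; the point is that, working with the local $h$-th radicals $x_p\in S_p$ and with representatives $\Omega_p$ taken inside $K$, the directions $\theta_p$ become rational — hence algebraic — which neutralises the $h$-th powers and makes Ridout's theorem applicable with exponent $2+\varepsilon$, exactly the exponent producing $q\ll t^{2+\varepsilon}$ and thus the thresholds $c<\tfrac12$ and $d<\tfrac34$. It is also here that one must choose the $x_p$ carefully and control the finitely many exceptional or degenerate directions uniformly in $q$. The lattice-point bookkeeping of the third step and the local computations underlying the first are then routine and parallel \cite{Roh84a}.
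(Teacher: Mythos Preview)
Your reduction to a lattice/Ridout problem is the same strategy as the paper's, but the linchpin of your argument is false: the subgroup
\[
H_p=\{a\in(\mathcal{O}\otimes\mathbb{Z}_p)^\times:\ a^m\in\mathbb{Z}_p^\times\ \text{for some }m\ge1\}
\]
is \emph{not} open in $(\mathcal{O}\otimes\mathbb{Z}_p)^\times$, and neither are the cosets $S_p$. For instance, when $p$ splits in $K$ one has $(\mathcal{O}\otimes\mathbb{Z}_p)^\times\cong\mathbb{Z}_p^\times\times\mathbb{Z}_p^\times$ with $\mathbb{Z}_p^\times$ embedded diagonally; then $H_p=\{(a,b):a/b\in\mu_{p-1}\}$ is a finite union of translates of the diagonal, hence closed of measure zero. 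So the density of $K$ buys you nothing, and in general the cosets in $H_p/\mathbb{Z}_p^\times$ (there are $p-1$ of them in the split case) are \emph{not} all represented by elements of $K$: an $\alpha\in K^\times$ lies in $H_p$ only when $\alpha/\bar\alpha\in\mu_K$, which accounts for at most $|\mu_K|$ cosets. Consequently your directions $\theta_p$ cannot be taken rational, and the height condition ``$H(\theta_p)>p$'' you impose to dispose of the exceptional set $E$ has no content.

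What survives, and what the paper actually does, is to show that $\theta_p=[\alpha_p:\beta_p]$ is \emph{algebraic} over $\mathbb{Q}$ (not rational). This is Lemma~3: from $(\omega_px_p)^{mh}w_0^{m}\in\mathbb{Z}_p^\times$ one extracts a nontrivial binary form over $\mathbb{Z}$ vanishing at $(\alpha_p,\beta_p)$, so up to a unit of $\mathbb{Z}_p$ the pair $(\alpha_p,\beta_p)$ is algebraic. That is exactly enough input for Ridout's theorem, and the paper then passes to the set $\mathcal{M}(q,t)$ of integer pairs $(u,v)$ with $u\beta_p-v\alpha_p\equiv0\pmod{q\mathbb{Z}_p}$, $u\beta_p-v\alpha_p\neq0$, $|u|,|v|\le t$, constructs an injection $\mathcal{N}_\omega(q,t)\hookrightarrow\mathcal{M}(q,kt)$, and reduces to the third form (Proposition~4), which is the statement already proved in \cite{Roh84a}. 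Your lattice and successive-minima bookkeeping for the second assertion is a reasonable alternative to Rohrlich's counting, but it too rests on the first assertion and hence on the algebraicity of $\theta_p$; once you replace the incorrect openness/density step by the polynomial-relation argument of Lemma~3, the rest of your outline can be made to go through along the paper's lines.
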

	
	Let us see how to deduce the first form of Propositions 1 and 2 from the second form. Fix $a<1,$ choose $c$ such that $\frac{a}{2}<c<\frac{1}{2}$. From (2) of the Main Lemma, when $q(\chi)$ is sufficiently large,
	\begin{equation}
		f(\chi)^{\frac{a}{2}}\leq(k_0q(\chi))^{\frac{a}{2}}\leq l^{-1}q(\chi)^c.
	\end{equation}
	Hence when $q(\chi)$ is sufficiently large,
	\begin{equation}
		N(\chi,f(\chi)^a)\leq \sum_{\omega\in\Omega}N_\omega(q(\chi),q(\chi)^c),
	\end{equation}
	the right side is 0. Thus we obtain the first form of Proposition 1.
	
	Next, take $d>\frac{1}{2},s<\frac{1}{2}$ as in the second form of Proposition 2, and choose $b,r$ such that $2d>b>1,s<r<\frac{1}{2}$. Therefore, when $f(\chi)$ is sufficiently large,
	\begin{equation}
		N(\chi,f(\chi)^b)\leq \sum_{\omega\in\Omega}(q(\chi),q(\chi)^d)
		<\sum_{\omega\in\Omega}q(\chi)^s
		<f(\chi)^r,
	\end{equation}
	i.e. the first form of Proposition 2 holds true. 
	
	Fix $\omega\in\Omega,$ replace $\mathcal{N}_\omega(q,t),N_\omega(q,t)$ by $\mathcal{N}(q,t),N(q,t)$ respectively. Let $\tau$ be an element of $\mathcal{O}$ such that $\{1,\tau\}$ is a basis for $\mathcal{O}$ over $\mathbb{Z}$, then for each $p\in P',\{1,\tau\otimes 1\}$ is a basis for $\mathcal{O}\otimes\mathbb{Z}_p$ over $\mathbb{Z}_p$. In particular, for every $p\in P'$, there exist unique $\alpha_p,\beta_p\in\mathbb{Z}_p$ such that
	\begin{equation}
		\omega_px_p=\alpha_p+\beta_p\tau\otimes 1.
	\end{equation}
	
	Let $\mathcal{M}(q,t)$ be the set of all pairs $(u,v)$ of rational integers satisfying:
	
	(1) For every $p\in P',u\beta_p-v\alpha_p\equiv 0 \mod q\mathbb{Z}_p$;
	
	(2) For every $p\in P',u\beta_p-v\alpha_p\neq 0$;
	
	(3) $|u|,|v|\leq t$.
	
	\noindent Let $M(q,t)$ be the cardinality of $\mathcal{M}(q,t)$. Our purpose is to convert Propositions 1 and 2 into a description of $M(q,t)$.
	
	Since all norms on a Euclidean space are equivlant, there exists a constant $k$ such that
	\begin{equation}\label{15}
		\max(|x|,|y|)\leq k|x+\tau y|,\qquad x,\,y\in \mathbb{R}.
	\end{equation}
	Now we verify 
	$$w=u+v\tau\mapsto(u,v)$$
	is a map from $\mathcal{N}(q,t)$ to $\mathcal{M}(q,kt)$. From
	$$w\equiv\eta_p\omega_px_p\mod q\mathcal{O}\otimes\mathbb{Z}_p,$$
	we have for each $p\in P'$,
	\begin{equation}
		u\equiv\eta_p\alpha_p\mod q\mathbb{Z}_p,
	\end{equation}
	\begin{equation}
		v\equiv\eta_p\beta_p\mod q\mathbb{Z}_p.
	\end{equation}
	Multiplying the first congruence by $\beta_p$ and the second by $\alpha_p$, and subtracting, we obtain
	\begin{equation}
		u\beta_p-v\alpha_p\equiv0\mod q\mathbb{Z}_p,
	\end{equation}
	i.e. $(u,v)$ satisfies (1) of $\mathcal{M}(q,kt)$.
	
	Suppose $u\beta_p-v\alpha_p=0$. Since $\alpha_p,\beta_p$ are not both $0$, we can write 
	\begin{equation}
		(u,v)=\eta(\alpha_p,\beta_p),\qquad \eta\in\mathbb{Q}_p,
	\end{equation}
	then $w=\eta\omega_px_p$ in $K\otimes\mathbb{Q}_p$.
	From $x_p\in S_p$, we have
	\begin{equation}
		w^h=\eta^h\omega_p^hx_p^h=\eta'\omega'w_0^{-1}\in K\otimes\mathbb{Q}_p,\qquad\eta'\in\mathbb{Q}_p,\ \omega'\in H_p.
	\end{equation}
	Take positive integer $m$ such that $(\omega')^m\in\mathbb{Z}_p^\times$. Then
	\begin{equation}
		w^{mh}w_0^m=(\eta'\omega')^m\in\mathbb{Q}_p\cap \mathcal{O}=\mathbb{Z},
	\end{equation}
	contradicting to $w^hw_0\mathcal{O}\neq\overline{w^hw_0}\mathcal{O}$. So $u\beta_p-v\alpha_p\neq 0,$ i.e. $(u,v)$ satifies (2) of $\mathcal{M}(q,kt)$.
	
	Finally, if $|w|\leq t,$ then $|u|,|v|\leq kt$ (by (\ref{15})), i.e. $(u,v)$ satifies (3) of $\mathcal{M}(q,kt)$. So $w=u+v\tau\mapsto(u,v)$ really defines a map from $\mathcal{N}(q,t)$ to $\mathcal{M}(q,kt)$, and the map is obviously injective. Therefore,
	\begin{equation}
		N(q,t)\leq M(q,kt).
	\end{equation}
	So it is sufficient to show the following form of Propositions 1 and 2. 
	
	\begin{proposition}{\rm (Third form of Propositions 1 and 2)}.
		Fix $c<\frac{1}{2},$ if $q$ is sufficiently large, then
		$$
		M(q,q^c)=0.
		$$
		Moreover, there exist absolute constants $d>\frac{1}{2},s<\frac{1}{2}$ such that if $q$ is sufficiently large, then
		$$
		M(q,q^d)<q^s.
		$$
	\end{proposition}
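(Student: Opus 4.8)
The plan is to read the congruence conditions defining $\mathcal{M}(q,t)$ as membership in a lattice of index $q$, and then to invoke Ridout's $p$-adic version of Roth's theorem \cite{Ridout}, following the pattern of \cite{Roh84a} but with the roots of unity there replaced by $h$-th radicals. First I would note that, since $\omega_p x_p=\alpha_p+\beta_p\tau\otimes1$ is a unit of $\mathcal{O}\otimes\mathbb{Z}_p$, the pair $(\alpha_p,\beta_p)$ is not both divisible by $p$, so at least one of $\alpha_p,\beta_p$ lies in $\mathbb{Z}_p^\times$ and $\Lambda_p:=\{(u,v)\in\mathbb{Z}^2\,|\,u\beta_p-v\alpha_p\equiv0 \mod q\mathbb{Z}_p\}$ has index $p^{v_p(q)}$ in $\mathbb{Z}^2$. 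By the Chinese Remainder Theorem $\Lambda:=\bigcap_{p|q}\Lambda_p$ has index exactly $q$, and condition (1) of $\mathcal{M}(q,t)$ says precisely that $(u,v)\in\Lambda$ (the conditions at $p\nmid q$ being vacuous). Thus $\mathcal{M}(q,t)\subseteq\Lambda\cap[-t,t]^2$, while condition (2) merely removes the finitely many lines $\{u\beta_p=v\alpha_p\}$, $p\in P'$; such a line meets $\mathbb{Z}^2\setminus\{0\}$ only when $\theta_p:=\alpha_p/\beta_p$ (or $\beta_p/\alpha_p$) is rational, and all of these ``bad lines'' are fixed once $\omega\in\Omega$ is fixed.

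The hard part is to show that each $\theta_p$ is an \emph{algebraic} number. Since $H_p/\mathbb{Z}_p^\times$ is finite there is a positive integer $m$, independent of $p\in P'$, with $(\omega_p x_p)^{hm}w_0^{m}\in\mathbb{Z}_p^\times$; call this unit $\delta_p$. The involution of $\mathcal{O}\otimes\mathbb{Z}_p$ coming from ${\rm Gal}(K/\mathbb{Q})$ fixes $\alpha_p,\beta_p\in\mathbb{Z}_p$, hence fixes $\delta_p$, so dividing $(\omega_p x_p)^{hm}=\delta_p w_0^{-m}$ by its conjugate gives
$$
\rho_p^{hm}=\Bigl(\frac{\bar{w}_0}{w_0}\Bigr)^{m},\qquad \rho_p:=\frac{\alpha_p+\beta_p\tau}{\alpha_p+\beta_p\bar\tau}.
$$
So $\rho_p$ is an $hm$-th radical of the algebraic number $(\bar w_0/w_0)^m$ and $\rho_p\neq1$ since $\tau\neq\bar\tau$; solving $\rho_p(\theta_p+\bar\tau)=\theta_p+\tau$ then gives $\theta_p=(\rho_p\bar\tau-\tau)/(1-\rho_p)\in\overline{\mathbb{Q}}$. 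This is exactly where working with $h$-th radicals in each local part cancels the $h$-th powers; in Rohrlich's case $h=1,\ w_0=1$ the number $\rho_p$ is just a root of unity.

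To prove the first assertion I would take $(u,v)\in\mathcal{M}(q,q^c)$ with $c<\tfrac12$ fixed and set $H=\max(|u|,|v|)$. Condition (1) gives $\prod_{p\in P'}|u\beta_p-v\alpha_p|_p\leq\prod_{p|q}|q|_p=q^{-1}$, condition (2) gives that each factor is nonzero, and $H\leq q^c$ gives $q^{-1}\leq H^{-1/c}$ with $1/c>2$. Using that one of $\alpha_p,\beta_p$ is a unit I would rewrite $|u\beta_p-v\alpha_p|_p$ as $|u-\theta_p v|_p$ (or $|v-\theta_p u|_p$), a $p$-adic integer, to obtain
$$
\prod_{p\in P'}\min\bigl(1,\,|u-\theta_p v|_p\bigr)\leq H^{-2-\varepsilon_0},\qquad \varepsilon_0:=\tfrac1c-2>0.
$$
By Ridout's theorem only finitely many $(u,v)\in\mathbb{Z}^2$ satisfy this, the $\theta_p$ being fixed algebraic numbers; but if $M(q,q^c)\neq0$ for infinitely many $q$ these would be infinitely many distinct pairs, their heights necessarily tending to infinity (otherwise the finitely many positive values of $\prod_{p\in P'}|u\beta_p-v\alpha_p|_p$ could not all be $\leq q^{-1}\to0$). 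This contradiction gives $M(q,q^c)=0$ once $q$ is large.

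For the second assertion I would deduce it from the first by planar lattice geometry. Fix $\eta>0$ small, put $d=\tfrac12+\eta$ and $s=5\eta$ (so $d>\tfrac12$ and, for $\eta$ small, $s<\tfrac12$), and let $\lambda_1\leq\lambda_2$ be the successive minima of $\Lambda$, so $\lambda_1\lambda_2\asymp q$. If the line of a shortest vector of $\Lambda$ is not a bad line, that vector lies in $\mathcal{M}(q,\lambda_1)$, so the first assertion forces $\lambda_1>q^{1/2-\eta}$ and $\#(\Lambda\cap[-q^d,q^d]^2)\ll(q^d/\lambda_1+1)^2\ll q^{4\eta}$. If it is a bad line $\ell$, then $\mathcal{M}(q,q^d)$ lies entirely off $\ell$, and comparing with the fundamental parallelogram of $\Lambda$ shows the number of points of $\Lambda$ in $[-q^d,q^d]^2$ off $\ell$ is $\ll q^{2d-1}+q^{d-1}\lambda_1+q^d/\lambda_1$: if $\lambda_1<q^{1-d}$ this is $0$ (so $\mathcal{M}(q,q^d)=\emptyset$), and if $\lambda_1\geq q^{1-d}$ it is $\ll q^{2d-1}\ll q^{2\eta}$. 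In all cases $M(q,q^d)\ll q^{4\eta}<q^{s}$ for $q$ large. The only genuinely delicate point is the algebraicity of the $\theta_p$ above; once that is in hand, Ridout's theorem and the lattice count complete the argument.
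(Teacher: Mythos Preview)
Your proof of the first assertion is essentially the paper's: both reduce to Ridout's $p$-adic Roth theorem. You case-split on which of $\alpha_p,\beta_p$ is a $p$-adic unit to form $\theta_p$, whereas the paper picks an integer $j$ with $\beta_p+j\alpha_p\neq0$ for every $p\in P$ and works uniformly with $\gamma_p=\alpha_p(\beta_p+j\alpha_p)^{-1}$, thereby avoiding the need to mix forms $u-\theta_p v$ and $v-\theta_p' u$ at different primes in a single application of Ridout. Your algebraicity paragraph is redundant at this point of the paper: algebraicity of $\alpha_p,\beta_p$ is established as Lemma~3 \emph{before} the proposition is stated, and the proposition is formulated under that hypothesis. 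That said, your method---pass to the ratio $\rho_p=(\alpha_p+\beta_p\tau)/(\alpha_p+\beta_p\bar\tau)$ and use that the Galois involution fixes $\delta_p\in\mathbb{Z}_p^\times$ to obtain $\rho_p^{hm}=(\bar w_0/w_0)^m$---is a valid and more conceptual alternative to Lemma~3, which instead expands $(\omega_px_p)^{mh}w_0^m$ directly in the basis $\{1,\tau\}$ and reads off a nontrivial homogeneous relation for $(\alpha_p,\beta_p)$.

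Where you genuinely diverge is the second assertion: the paper gives no argument and simply refers to Section~3 of \cite{Roh84a}, while you supply a self-contained lattice argument. Reading condition~(1) as membership in a sublattice $\Lambda\subset\mathbb Z^2$ of index $q$, you observe that a shortest vector of $\Lambda$ either avoids all bad lines---so it lies in $\mathcal M(q,\lambda_1)$, the first assertion forces $\lambda_1>q^{1/2-\eta}$, and a packing bound gives $\#(\Lambda\cap[-q^d,q^d]^2)\ll q^{4\eta}$---or lies on some bad line $\ell$, and then counting $\Lambda$-points off $\ell$ via a reduced basis (using $\lambda_1\lambda_2\asymp q$ and $\lambda_1\ll q^{1/2}$) gives $\ll q^{2\eta}$. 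This is correct and has the advantage of not sending the reader to \cite{Roh84a}; the cost is that the constants and case analysis need slightly more care than your sketch indicates (for instance, $\rho_p=1$ exactly when $\beta_p=0$, a case your formula for $\theta_p$ does not cover but which is trivial anyway).
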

	
	Since $N(q,t)\leq M(q,kt)$, we can easily deduce the second form from the third form. Fix $c<\frac{1}{2}$, choose $c'$ such that $c<c'<\frac{1}{2}$. Then if $q$ is sufficiently large, we have
	\begin{equation}
		N(q,q^c)\leq M(q,kq^c)\leq M(q,q^{c'})=0,
	\end{equation}
	i.e. the second form of Proposition 1. The argument of Proposition 2 is similar.
	
	We show that we may assume $\alpha_p,\beta_p$ to be algebraic over $\mathbb{Q}$ in the following lemma.
	
	\begin{lemma}
		$(\alpha_p,\beta_p)$ can be written as $(\alpha_p,\beta_p)=\eta_p(\alpha_p',\beta_p'),$ where $\alpha_p',\beta_p'$ is an element in $\mathbb{Z}_p$ which is algebraic over $\mathbb{Q}$, $\eta_p\in\mathbb{Z}_p^\times$.
	\end{lemma}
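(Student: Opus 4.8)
The plan is to show that $\omega_p x_p$ becomes algebraic over $\mathbb{Q}$ after it is divided by a suitable element of $\mathbb{Z}_p^\times$, and then to extract its coordinates with respect to the basis $\{1,\tau\otimes 1\}$. The key arithmetic input is the following: for every $n\ge 1$ the subgroup $(\mathbb{Z}_p^\times)^n$ is open of finite index in $\mathbb{Z}_p^\times$ (it contains $1+p^{k}\mathbb{Z}_p$ for $k>v_p(n)$ and is compact), so, since the integers prime to $p$ are dense in $\mathbb{Z}_p^\times$, every $u\in\mathbb{Z}_p^\times$ can be written $u=u'\eta^{n}$ with $u'\in\mathbb{Z}\setminus p\mathbb{Z}$ (in particular rational, hence algebraic over $\mathbb{Q}$) and $\eta\in\mathbb{Z}_p^\times$. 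The second ingredient is that, since $\omega_p\in H_p$ and $x_p\in S_p$, there are integers $N,M\ge 1$ with $\omega_p^{N}\in\mathbb{Z}_p^\times$ and $(x_p^h w_0)^{M}\in\mathbb{Z}_p^\times$; recall also that $w_0\in K^\times$.

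Set $L=NM$, $A=\omega_p^{hL}\in\mathbb{Z}_p^\times$ and $B=(x_p^h w_0)^{M}\in\mathbb{Z}_p^\times$. A direct computation gives
$$
(\omega_p x_p)^{hL}=A\cdot\bigl((x_p^h)^{M}\bigr)^{N}=A\cdot\bigl(Bw_0^{-M}\bigr)^{N}=w_0^{-MN}W,\qquad W:=AB^{N}\in\mathbb{Z}_p^\times .
$$
Applying the opening fact with $n=hL$ writes $W=W'\eta^{hL}$ with $W'$ a rational $p$-adic unit and $\eta\in\mathbb{Z}_p^\times$, so that
$$
\gamma:=\omega_p x_p\,\eta^{-1}\in(\mathcal{O}\otimes\mathbb{Z}_p)^\times,\qquad \gamma^{hL}=w_0^{-MN}W'\in K .
$$
Since $\gamma$ satisfies the monic polynomial $T^{hL}-w_0^{-MN}W'$ with coefficients in $K$, each of its components in $K\otimes_{\mathbb{Q}}\mathbb{Q}_p\cong\prod_{v\mid p}K_v$ is algebraic over $K$, hence over $\mathbb{Q}$, so $\gamma$ is algebraic over $\mathbb{Q}$. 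Writing $\gamma=\alpha_p'+\beta_p'(\tau\otimes1)$ we have $\alpha_p',\beta_p'\in\mathbb{Z}_p$, and $\omega_p x_p=\eta\gamma$ gives the relation $(\alpha_p,\beta_p)=\eta(\alpha_p',\beta_p')$. Thus it only remains to show $\alpha_p',\beta_p'$ are algebraic.

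For this I would let $c=\sigma\otimes\mathrm{id}$ be the $\mathbb{Q}_p$-algebra automorphism of $K\otimes_{\mathbb{Q}}\mathbb{Q}_p$ induced by the nontrivial element $\sigma$ of $\mathrm{Gal}(K/\mathbb{Q})$. It fixes the scalar subring $\mathbb{Q}_p$ pointwise, hence carries elements algebraic over $\mathbb{Q}$ to elements algebraic over $\mathbb{Q}$; in particular $c(\gamma)$ is algebraic, and therefore so is $\gamma-c(\gamma)$ (the $\mathbb{Q}$-algebra generated by $\gamma$ and $c(\gamma)$ is finite over $\mathbb{Q}$). Since $c(\gamma)=\alpha_p'+\beta_p'(\bar\tau\otimes1)$ with $\bar\tau=\sigma(\tau)$, we get $\gamma-c(\gamma)=\beta_p'\bigl((\tau-\bar\tau)\otimes1\bigr)$. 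Now $\tau-\bar\tau\in K^\times$, and under $K\otimes_{\mathbb{Q}}\mathbb{Q}_p\cong\prod_{v\mid p}K_v$ its image has nonzero component in every factor, so it is invertible in $K\otimes_{\mathbb{Q}}\mathbb{Q}_p$ with inverse the image of $(\tau-\bar\tau)^{-1}\in K$. Hence $\beta_p'=(\gamma-c(\gamma))\cdot\bigl((\tau-\bar\tau)^{-1}\otimes1\bigr)$ is a product of elements algebraic over $\mathbb{Q}$, hence algebraic, and then so is $\alpha_p'=\gamma-\beta_p'(\tau\otimes1)$; taking $\eta_p=\eta$ completes the argument.

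The one point that requires care is the opening fact when $p\mid h$: one might fear that $p$-adic units lie far from $h$-th powers, but $(\mathbb{Z}_p^\times)^{hL}$ is still open of finite index, so the density of the integers prime to $p$ in $\mathbb{Z}_p^\times$ is enough to hit every coset. It is also worth noting that the argument uses only $\omega_p\in H_p$, and never that $\omega_p$ is a root of unity, which is precisely what the definition of $\Omega_p$ provides.
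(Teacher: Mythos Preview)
Your proof is correct, and it takes a genuinely different route from the paper's. The paper starts from the same observation that $(\omega_p x_p)^{mh}w_0^{m}\in\mathbb{Z}_p^\times$ for a suitable $m$, but then proceeds by brute-force binomial expansion: writing $(\omega_p x_p)^{mh}w_0^{m}$ in the basis $\{1,\tau\otimes1\}$ gives explicit polynomials in $\alpha_p,\beta_p$ with integer coefficients, and the vanishing of the $\tau\otimes1$-coordinate is a nontrivial homogeneous relation $F(\alpha_p,\beta_p)=0$; since $\omega_p x_p$ is a unit, at least one of $\alpha_p,\beta_p$ lies in $\mathbb{Z}_p^\times$, and dividing by it makes the ratio algebraic. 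Your argument replaces this computation by two structural steps: a density argument to absorb the unit $W$ into an $hL$-th power, which makes $\gamma=\omega_p x_p\eta^{-1}$ satisfy a monic polynomial over $K$, and then the $\mathbb{Q}_p$-linear Galois involution $c=\sigma\otimes\mathrm{id}$ to split off the coordinates. The paper's approach is shorter and entirely elementary, producing an explicit polynomial; your approach is more conceptual, delivers the scaling unit $\eta_p$ directly rather than a posteriori, and would adapt with less change to a higher-degree base field. Two minor remarks: the bound ``$k>v_p(n)$'' in your opening fact needs $k\ge v_p(n)+2$ when $p=2$, though all you actually use is that $(\mathbb{Z}_p^\times)^n$ is open; and $\Omega_p$ is only a set of coset representatives for $H_p/\mathbb{Z}_p^\times$, so its elements need not be roots of unity in the first place.
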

	
	\begin{proof}
		For each nonnegative integer $j$, define $a_j,b_j\in\mathbb{Z}$ through $\tau^j=a_j\tau\otimes 1+b_j$. Take positive integer $m$ such that
		\begin{equation}
			(\omega_px_p)^{mh}\in\mathbb{Z}_p^\times w_0^{-m},
		\end{equation}
		then
		\begin{equation}
			(\omega_px_p)^{mh}w_0^{m}\in\mathbb{Z}_p^\times,
		\end{equation}
		\begin{equation}
			(\omega_px_p)^{mh}=\left(\sum_{j=0}^{mh}b_j\binom{mh}{j}\alpha_p^{mh-j}\beta_p^j\right)+\left(\sum_{j=0}^{mh}a_j\binom{mh}{j}\alpha_p^{mh-j}\beta_p^j\right)\tau\otimes 1.
		\end{equation}
		Let
		\begin{equation}
			w_0^{m}=u_0+v_0\tau\otimes 1,\qquad u_0,\ v_0\in\mathbb{Z},
		\end{equation}
		then
		\begin{align}
			(\omega_px_p)^{mh}w_0^{m}=&
			\left(\sum_{j=0}^{mh}(u_0b_j+v_0b_2a_j)\binom{mh}{j}\alpha_p^{mh-j}\beta_p^j\right)\notag\\&+\left(\sum_{j=0}^{mh}(v_0b_j+(u_0+v_0a_2)a_j)\binom{mh}{j}\alpha_p^{mh-j}\beta_p^j\right)\tau\otimes 1.
		\end{align}
		Therefore,
		\begin{equation}
			\sum_{j=0}^{mh}(v_0b_j+(u_0+v_0a_2)a_j)\binom{mh}{j}\alpha_p^{mh-j}\beta_p^j=0.
		\end{equation}
		Note that $a_0=0,b_0=1,a_1=1,b_1=0$.	
		When $v_0\neq 0$, we have $v_0b_0+(u_0+v_0a_2)a_0=v_0\neq 0$. 
		When $v_0=0$, we have $u_0\neq 0,v_0b_1+(u_0+v_0a_2)a_1=u_0\neq 0$. 
		Hence $(\alpha_p,\beta_p)$ always satisfies a nontrivial homogeneous polynomial with coefficients in $\mathbb{Z}$. This implies the desired conclusion.		
	\end{proof}
	
	Since the conditions of $\mathcal{M}(q,t)$ are homogeneous, we may assume $\alpha_p,\beta_p$ are algebraic over $\mathbb{Q}$. We see that the desired conclusions have no relation with $p\in P\backslash P'$. So we may assume $P=P'$ for simplicity of notation.
	
	The third form of Propositions 1 and 2 is already proved in $\cite{Roh84a}$ by applying Roth's theorem. We shall state his result in the next section and complete our proof.
	
	\section{Roth's theorem}
	We restate the settings of the third form of Propositions 1 and 2 in this section. This is the same as the third section of $\cite{Roh84a}$. But for convenience of the readers, we still include part of the proof here to show why it is related with Roth's theorem.
	
	Let $P$ be a finite set of rational primes. For each $p\in P$, let $\alpha_p,\beta_p$ be elements of $\mathbb{Z}_p$ which are algebraic over $\mathbb{Q}$ and are not both 0. Given a positive real number $t$, and a positive integer $q$ only divisible by primes in $P$, let $\mathcal{M}(q,t)$ be the set of all pairs of rational integers $(u,v)$ satisfying:
	
	(1) For every $p\in P$,
	
	$$
	u\beta_p-v\alpha_p\equiv0 \mod \mathbb{Z}_p;
	$$
	
	(2) For every $p\in P$,
	
	$$
	u\beta_p-v\alpha_p\neq 0;
	$$
	
	(3) $|u|,|v|\leq t$.
	
	\noindent Let $\mathcal{M}(q,t)$ denote the cardinality of $M(q,t)$.
	
	\begin{proposition}
		Fix $c<\frac{1}{2},$ if $q$ is sufficiently large, then
		$$
		M(q,q^c)=0.
		$$
		Moreover, there exist absolute constants $d>\frac{1}{2},s<\frac{1}{2}$ such that if $q$ is sufficiently large, then
		$$
		M(q,q^d)<q^s.
		$$
	\end{proposition}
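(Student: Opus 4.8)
This is in essence Section 3 of \cite{Roh84a}, and the plan is to deduce it from Ridout's $p$-adic strengthening of Roth's theorem \cite{Ridout}. First I would normalize: the three conditions defining $\mathcal{M}(q,t)$ are homogeneous in each pair $(\alpha_p,\beta_p)$, so I may multiply $(\alpha_p,\beta_p)$ by a suitable power of $p$ to arrange $\max(|\alpha_p|_p,|\beta_p|_p)=1$; this only replaces $q$ by $q\prod_{p\in P}p^{k_p}$, which is $q$ up to a bounded factor and so does not affect the statement. With this normalization the first condition says precisely that $(u,v)$ lies in the sublattice
$$
\Lambda_q=\{(u,v)\in\mathbb{Z}^2\ |\ u\beta_p\equiv v\alpha_p\mod q\mathbb{Z}_p\ \text{for all }p\in P\},
$$
and since one of $\alpha_p,\beta_p$ is a $p$-adic unit, a Chinese remainder theorem computation gives $[\mathbb{Z}^2:\Lambda_q]=q$. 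The second condition removes the points lying on the union $\mathcal{B}$ of the (at most $|P|$) rational lines through the origin along which $u\beta_p-v\alpha_p$ vanishes for some $p$, i.e.\ one line for each $p$ with $[\alpha_p:\beta_p]\in\mathbb{P}^1(\mathbb{Q})$. Writing $B_t=\{(u,v):|u|,|v|\leq t\}$, we have $\mathcal{M}(q,t)=(\Lambda_q\cap B_t)\setminus(\mathcal{B}\cup\{0\})$.

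The crucial input is that $\Lambda_q$ has no short vectors off $\mathcal{B}$: \emph{for $q$ large, every nonzero $(u,v)\in\Lambda_q\setminus\mathcal{B}$ has $\max(|u|,|v|)\geq q^{1/(2+\epsilon)}$}, where $\epsilon>0$ is fixed in advance. Indeed, for $(u,v)$ off $\mathcal{B}$ the numbers $u\beta_p-v\alpha_p$ are nonzero, and they are algebraic by the reduction of Lemma 3; Ridout's theorem, applied at each $p\in P$ with the target point $[\alpha_p:\beta_p]$, says that for all but finitely many rational directions $[u:v]$ one has $\prod_{p\in P}|u\beta_p-v\alpha_p|_p\geq\max(|u|,|v|)^{-2-\epsilon}$ (factoring out $\gcd(u,v)$ only improves this). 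On the other hand $(u,v)\in\Lambda_q$ forces $u\beta_p-v\alpha_p\in q\mathbb{Z}_p$, hence $\prod_{p\in P}|u\beta_p-v\alpha_p|_p\leq\prod_{p\in P}|q|_p=1/q$; comparing the two yields $\max(|u|,|v|)\geq q^{1/(2+\epsilon)}$. For the finitely many exceptional rational directions $(u',v')$ off $\mathcal{B}$, a point $g(u',v')\in\Lambda_q$ forces $g\geq q/C_{(u',v')}$, with $C_{(u',v')}=\prod_{p}p^{v_p(u'\beta_p-v'\alpha_p)}$ a fixed constant, so $\max(|u|,|v|)\geq q/C_{(u',v')}$; taking $q$ large enough the claimed bound holds in all cases.

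Granting this, both halves follow quickly. For $M(q,q^c)=0$: since $c<\tfrac12$, choose $\epsilon$ with $c<1/(2+\epsilon)$; then for $q$ large $q^c<q^{1/(2+\epsilon)}$, so $(\Lambda_q\setminus\mathcal{B})\cap B_{q^c}=\{0\}$ and $\mathcal{M}(q,q^c)=\emptyset$. For the bound $M(q,q^d)<q^s$, I would count $\#\big((\Lambda_q\setminus\mathcal{B})\cap B_t\big)$ with $t=q^d$ by elementary geometry of numbers: take as first basis vector of $\Lambda_q$ a shortest vector off $\mathcal{B}$ (automatically primitive), of length $\mu\geq q^{1/(2+\epsilon)}$; slicing $B_t$ by the lines parallel to it, each slice contains $\ll 1+t/\mu$ lattice points of which at most $|P|$ lie on $\mathcal{B}$, and there are $\ll 1+t\mu/q$ nonempty slices, so that
$$
M(q,q^d)\ \ll\ 1+\frac{t}{\mu}+\frac{t^2}{q}\ \ll\ 1+q^{d-1/(2+\epsilon)}+q^{2d-1}.
$$
Choosing $d$ slightly larger than $\tfrac12$ and then $\epsilon$ small enough makes both exponents $<\tfrac12$, so $M(q,q^d)<q^s$ for a suitable absolute $s<\tfrac12$ and all large $q$.

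The only place needing genuine care is the degenerate configuration in which some $\alpha_p/\beta_p$ is rational, so that $\mathcal{B}\neq\emptyset$ and $\Lambda_q$ really does have short vectors (those on $\mathcal{B}$): one must check that the exceptional directions are handled uniformly in $q$ and that the lattice count above remains valid when several fixed lines are removed. The extreme sub-case — one rational direction $(u'_*,v'_*)$ that is bad at \emph{every} prime of $P$ — is actually the easiest, since those points are never in $\mathcal{M}$; the surviving points satisfy a single divisibility condition on the nonzero integer $uv'_*-vu'_*$, and a direct count bounds their number by $\ll t^2/q=q^{2d-1}$. Apart from this case analysis the argument is routine, which is why \cite{Roh84a} disposes of it briefly.
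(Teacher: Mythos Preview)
Your argument is correct. For the first statement it is the same Ridout-based approach as the paper's, with only cosmetic differences: the paper reaches a standard one-variable form of Ridout via the substitution $\gamma_p=\alpha_p(\beta_p+j\alpha_p)^{-1}$, $(u,v)\mapsto(u,v+ju)$, whereas you normalize $(\alpha_p,\beta_p)$ to have a unit coordinate and invoke Ridout in projective form; your lattice reformulation, with $[\mathbb{Z}^2:\Lambda_q]=q$ and the shortest-off-$\mathcal B$ vector bounded below by $q^{1/(2+\epsilon)}$, is just a tidy repackaging of the same inequality $\prod_p|u\beta_p-v\alpha_p|_p\le q^{-1}$ versus $\prod_p|u\beta_p-v\alpha_p|_p\gg\max(|u|,|v|)^{-\kappa}$.

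For the second statement the paper gives no argument at all, deferring entirely to Section~3 of \cite{Roh84a}; your geometry-of-numbers count (slicing $B_t$ parallel to a shortest primitive vector of $\Lambda_q$ lying off $\mathcal B$) is a correct way to supply it and yields the right exponents. One small gap worth patching: the product $(1+t/\mu)(1+t\mu/q)$ also produces a cross term $t\mu/q$, which you drop from the displayed bound. This is harmless, since if $\mu>Ct$ there are no lattice points off $\mathcal B$ in $B_t$ at all and $M(q,t)=0$; hence one may assume $\mu\ll t$, and then $t\mu/q\ll t^2/q$. A sentence to this effect would make the estimate airtight.
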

	\begin{proof}
		We only give the proof for $M(q,q^c)=0$ and refer to Section 3 of $\cite{Roh84a}$ for the second statement.
		
		Suppose that for each $p\in P$ we are given a $\gamma_p\in\mathbb{Q}_p$ which is algebraic over $\mathbb{Q}$. Let $\kappa$ be a real number greater than 2. In view of Ridout's generalization of Roth's theorem, we know that the inequality
		\begin{equation}
			\prod_{p\in P}|u-v\gamma_p|_p\leq(\max(|u|,|v|))^{-\kappa}
		\end{equation}
		has finitely many solutions in coprime integers $u$ and $v$. Then there exists a positive constant $k$ such that
		\begin{equation}
			\prod_{p\in P}|u-v\gamma_p|_p>k(\max(|u|,|v|))^{-\kappa},
		\end{equation}
		as long as $u,v$ are integers such that the left side is not 0. Note that there is no need to add coprime condition, because if the inequality holds for a coprime pair $(u,v)$, then by
		\begin{equation}
			\prod_{p\in P}|n|_p\geq |n|^{-1}\geq |n|^{-\kappa},
		\end{equation}
		we know that the inequality also holds if we replace $(u,v)$ by $(nu,nv)$.
		
		Take $j$ such that $\beta_p+j\alpha_p\neq 0,\forall p\in P$. Let
		\begin{equation}
			\gamma_p=\alpha_p(\beta_p+j\alpha_p)^{-1}.
		\end{equation}
		Choose $\kappa$ such that
		\begin{equation}
			c<\frac{1}{\kappa}<\frac{1}{2}.
		\end{equation}
		If $u,v$ are integers such that $u\beta_p-v\alpha_p\neq 0,\forall p\in P$, then
		\begin{align}
			\prod_{p\in P}|u\beta_p-v\alpha_p|_p&=\prod_{p\in P}|\beta_p+j\alpha_p|_p|u-(v+ju)\gamma_p|_p\notag\\
			&>\Bigl(\prod_{p\in P}|\beta_p+j\alpha_p|_p\Bigr)k(\max(|u|,|v+ju|))^{-\kappa}.
		\end{align}
		Therefore,
		\begin{equation}
			\prod_{p\in P}|u\beta_p-v\alpha_p|_p>k_1(\max(|u|,|v+ju|))^{-\kappa},
		\end{equation}
		where
		$$
		k_1=\Bigl(\prod_{p\in P}|\beta_p+j\alpha_p|_p\Bigr) k (1+|j|)^{-\kappa}.
		$$
		
		If $(u,v)\in\mathcal{M}(q,q^c)$, we have 
		\begin{equation}
			\prod_{p\in P}|u\beta_p-v\alpha_p|_p\leq q^{-1}
		\end{equation}
		and 
		\begin{equation}
			\max(|u|,|v|)\leq q^{-1}.
		\end{equation}
		Therefore,
		\begin{equation}
			q^{-1}>k_1q^{-\kappa c}.
		\end{equation}
		Since $\kappa c<1$, we get contradiction when $q$ is sufficiently large. So if $q$ is sufficiently large, we have $M(q,q^c)=0$.
	\end{proof}
	So far, we complete the proof of Theorem 1.

	Academy of Mathematics and Systems Science, Chinese Academy of Sciences, No. 55, Zhong Guan Cun East
	Road, Beijing, 100190, China.
	
	E-mail address: jiahaijun22@mails.ucas.ac.cn

\end{document}